\newtheorem{definition}{Definition}
\newtheorem{proposition}{Proposition}
\newtheorem{corollary}{Corollary}
\newtheorem{lemma}{Lemma}
\newtheorem{theorem}{Theorem}
\begin{document}

\title{Characterization of signed Gauss paragraphs}

\author{Jos\'{e} Gregorio Rodr\'{i}guez-Nieto
 \footnote{ Escuela de Matem\'aticas,
Universidad Nacional de Colombia,
Medell\'in-Colombia,
jgrodrig@unal.edu.co.}}

\maketitle

\begin{abstract}

In this paper we use theory of embedded graphs on oriented and compact $PL$-surfaces to construct minimal realizations of signed Gauss paragraphs. We prove that the genus of the ambient surface of these minimal realizations can be seen as a function of the maximum number of  Carter's circles. For the case of signed Gauss words, we use a generating set of $H_1(S_w,\mathbb{Z})$, given in \cite{CaEl}, and the intersection pairing of immersed $PL$-normal curves to present a short solution of the signed Gauss word problem. Moreover, we define the join operation on signed Gauss paragraphs to produce signed Gauss words such that both can be realized on the same minimal genus $PL$-surface.

\textit{Keywords:} Signed Gauss paragraph problem, Normal curves, Signed Gauss word problem, signed Gauss words, virtual normal curves.

\textit{Mathematics Subject Classification 2010 :} 18G30, 55U10, 57M99.  
\end{abstract}
\section{Introduction}
The Gauss word problem was proposed by K. F. Gauss in the beginning of the XIX century when he was studying certain class of words $w$ with the property that each letter of $w$ appeared exactly twice. He noted that if we label the crossings points of an oriented normal curve $\gamma$ on the plane, then the word formed with the letters that we met when following $\gamma$ has the same characteristics as Gauss words . In $1991$, J. S. Carter \cite{Ca} introduced the concept of \textit{signed Gauss paragraphs} to classify the stable geotopy class of immersed curves on orientable and compact surfaces. The construction of these signed paragraphs is similar to the one of Gauss words, but Carter considered, not only the number of components of the normal curve, but also the way in which the curve meets itself. 

The work of Carter left a topological solution of the planarity problem for signed Gauss paragraphs. He gave an algorithm to find, from a signed Gauss paragraph $w$, an oriented normal curve $\gamma_w$ embedded in an minimum genus orientable surface $M_w$, such that $w$ is the signed Gauss paragraph of $\gamma_w$. In this case we say that $(M_w,\gamma_w)$ is a minimal realization of $w$. In $1993$, G. Cairns and D. Elton, see \cite{CaEl}, continued the work of Carter and they presented a combinatorial solution of a particular case,  known as the \textit{signed Gauss word problem}. Here, signed Gauss words mean signed Gauss paragraphs with one component.

In order to solve the signed Gauss word problem, G. Cairns and D. Elton constructed a generating set $\{\Gamma, \gamma_1,...,\gamma_n\}$ for $H_1(M_{w},\mathbb{Z})$, and they proved that all these curves are null-homologue if and only if, for each of them, there exist a Alexander numbering. Their proof is very elegant, but it is so cumbersome. Here, we use intersection homology of curves on orientable surfaces, instead of Alexander numbering, to determine when these curves are null-homologue, because we think that this way is more direct. The intersection homology, $\left\langle \varphi,\lambda \right\rangle$, of two homology classes $\varphi$ and $\lambda$ on an orientable and compact surface $\Sigma$, is a very well known invariant of $H_1(\Sigma,\mathbb{Z})$. This number $\left\langle \varphi,\lambda \right\rangle$ has been defined as the number of intersections of any two representative transverse curves of $\varphi$ and $\lambda$, naturally, depending on the way in which one of them intersects the other one. One of the most important property of this number is: 

\begin{theorem}
\label{homolo} If $\varphi \in H_1(\Sigma,\mathbb{Z})$ and the homology intersection between $\varphi$ and any $\lambda$ in $H_1(\Sigma,\mathbb{Z})$ is equal to zero, then  $\varphi$ is null-homologue. Moreover, if moreover $\Sigma$ is connected and closed, and $\{\gamma_1,...,\gamma_m\}$ is a generating set of $H_1(\Sigma,\mathbb{Z})$, then $\Sigma=\mathbb{S}^{2}$ if and only if $\left\langle \gamma_i,\gamma_j \right\rangle=0$, for every $i,j$.
\end{theorem}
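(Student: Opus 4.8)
The plan is to derive both assertions from a single input — the non-degeneracy of the intersection pairing on the first homology of an oriented surface, which is the content of Poincar\'e (respectively Poincar\'e--Lefschetz) duality — and to reduce everything else to formal bilinear algebra. First I would record that $\langle\,\cdot\,,\,\cdot\,\rangle$ is $\mathbb{Z}$-bilinear and skew-symmetric on $H_1(\Sigma,\mathbb{Z})$, and that the first homology of a compact orientable surface is a free abelian group of finite rank.

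For the first claim, the key step is that, when $\Sigma$ is closed, the adjoint homomorphism
\[
\Phi\colon H_1(\Sigma,\mathbb{Z})\longrightarrow \mathrm{Hom}\bigl(H_1(\Sigma,\mathbb{Z}),\mathbb{Z}\bigr),\qquad \Phi(\varphi)=\langle\varphi,\,\cdot\,\rangle,
\]
is injective. This is precisely Poincar\'e duality: under the isomorphisms $H_1(\Sigma,\mathbb{Z})\cong H^1(\Sigma,\mathbb{Z})$ and $H^2(\Sigma,\mathbb{Z})\cong\mathbb{Z}$ coming from the orientation class, the intersection pairing is identified with the cup-product pairing $H^1\times H^1\to H^2\cong\mathbb{Z}$; this pairing is unimodular because $\Sigma$ is a closed oriented manifold and, by the universal coefficient theorem, the relevant $\mathrm{Ext}$ terms vanish (here $H_0$ and $H_1$ are free). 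Injectivity of $\Phi$ says exactly that $\langle\varphi,\lambda\rangle=0$ for every $\lambda$ forces $\varphi=0$ in $H_1(\Sigma,\mathbb{Z})$, i.e.\ that $\varphi$ is null-homologue, which is the first assertion. (If one wants to allow $\Sigma$ with non-empty boundary here, one should pair $H_1(\Sigma,\mathbb{Z})$ against the relative classes in $H_1(\Sigma,\partial\Sigma;\mathbb{Z})$ and invoke Lefschetz duality instead; the closed case is all that the ``moreover'' uses.)

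For the ``moreover'', let $\Sigma$ be connected and closed. If $\Sigma=\mathbb{S}^{2}$ then $H_1(\mathbb{S}^{2},\mathbb{Z})=0$, so every $\gamma_i$ is the zero class and all intersection numbers vanish. Conversely, assume $\langle\gamma_i,\gamma_j\rangle=0$ for all $i,j$, where $\{\gamma_1,\dots,\gamma_m\}$ generates $H_1(\Sigma,\mathbb{Z})$. Writing arbitrary classes $\varphi=\sum_i d_i\gamma_i$ and $\lambda=\sum_j c_j\gamma_j$ with $c_j,d_i\in\mathbb{Z}$, bilinearity gives $\langle\varphi,\lambda\rangle=\sum_{i,j}d_i c_j\langle\gamma_i,\gamma_j\rangle=0$; hence every class of $H_1(\Sigma,\mathbb{Z})$ has zero intersection with every other, so by the first part every class is null-homologue, i.e.\ $H_1(\Sigma,\mathbb{Z})=0$. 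Since a closed connected orientable surface of genus $g$ has $H_1\cong\mathbb{Z}^{2g}$, we conclude $g=0$ and $\Sigma=\mathbb{S}^{2}$.

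The only substantive point — and the step I expect to require the most care — is the integral unimodularity of the intersection form, that is, that $\Phi$ is injective (indeed bijective) over $\mathbb{Z}$ and not merely after tensoring with $\mathbb{Q}$; this has to be extracted from Poincar\'e duality together with the torsion-freeness of the homology of a surface. Once that is in hand, the remaining arguments are the routine bilinear manipulations above and the classification of closed orientable surfaces.
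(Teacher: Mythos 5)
Your argument is correct, and it establishes the same facts as the paper, but it justifies the one substantive step by a different route. The paper does not invoke Poincar\'e duality abstractly: it represents the closed connected genus-$g$ surface by its standard polygon, takes the resulting symplectic basis $a_1,b_1,\dots,a_g,b_g$ of $H_1(\Sigma,\mathbb{Z})$ satisfying $\langle a_i,a_j\rangle=\langle b_i,b_j\rangle=0$ and $\langle a_i,b_i\rangle=1$, writes $\varphi=\sum g_i a_i+\sum h_j b_j$, and reads off the coefficients as $\pm\langle\varphi,b_i\rangle$ and $\pm\langle\varphi,a_j\rangle$; vanishing of all pairings then forces all coefficients to vanish. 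This is precisely the explicit, computational form of the integral unimodularity that you extract from Poincar\'e duality and the universal coefficient theorem, so the two proofs are logically equivalent at their core; the paper's version is more elementary and self-contained in its combinatorial/PL setting, while yours is quicker to state but leans on a larger theorem. Your treatment of the ``moreover'' (bilinearity to propagate the vanishing to all of $H_1$, then $H_1(\Sigma,\mathbb{Z})\cong\mathbb{Z}^{2g}$ and the classification of surfaces to conclude $g=0$) supplies a step the paper itself leaves implicit, since its Section~4 proof only writes out the first assertion. Your parenthetical caution about boundary is also well taken: for the paper's general ``surfaces'' (which may have holes) the first assertion as literally stated fails --- the core circle of an annulus pairs trivially with every class in $H_1$ yet is not null-homologous --- and the paper's own proof, like yours, tacitly restricts to the closed case.
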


Motivated by the previous theorem we give a brief study of homology intersection of oriented $PL$-normal curves immersed in orientable and compact $PL$-surfaces. After that, for a signed Gauss paragraph $w$ we define a $4$-valency graph $\gamma_w$, with extra information on its vertices, and we  use theory of embedded graphs to get a realization $(\Sigma,\gamma_w)$ of $w$. 

Let $\Omega^{(k)}_w$ denote the union of triangles of the $k^{th}$-barycentric subdivision of $\Sigma$ that meet $\gamma_w$. Then $\{(\Omega^{(w)}_w,\gamma_w)\}^{\infty}_{k=2}$ is an infinite collection of realization of $w$, such that the number of components of the boundary of any of these $PL$-surfaces, $\Omega^{(k)}_{w}$, is equal to the number of Carter's circles of $w$ as defined in \cite{Ca}. Therefore, by gluing discs to the boundary of any $\Omega^{(k)}_{w}$ we get a minimal realization $(S_w,\gamma^{(k)}_{w})$ of $w$. 

If we have a generating set of $H_1(S_w,\mathbb{Z})$, from the Theorem \ref{homolo}, we would obtain a solution of the signed Gauss paragraph problem, but we only have a generating set of $H_1(S_w,\mathbb{Z})$ for the case when $w$ is a signed Gauss words, see \cite{CaEl}. From that, we define the \textit{join} operation for signed Gauss paragraphs $w$ to produce signed Gauss words $\widehat{w}$ with the property that $H_1(S_w,\mathbb{Z})$ is isomorphic to $H_1(S_{\widehat{w}},\mathbb{Z})$.

This paper is organized as follows. In Section $2$ we review the definitions of graph realizations and orientable and compact $PL$-surfaces. Section $3$ gives the definition of oriented $PL$-normal curves and signed Gauss paragraphs. We describe a method different from the Carter's to construct a minimal realization $(S_w,\gamma_w)$ for a given signed Gauss paragraph $w$ and we study the processes of G. Cairns and D. Elton \cite{CaEl} to find a particular generating set of $H_1(S_w,\gamma_w)$. We also define the join function on the set of signed Gauss paragraphs, and using this map we connect the solutions of the signed Gauss paragraphs and the signed Gauss words problems. In Section $4$ we study properties of the homology intersection of oriented $PL$-normal curves on $PL$-surfaces. Section $5$ solves the signed Gauss word problem by using homology intersection theory. Moreover, we show that the solution presented by G. Cairns and D. Elton can be approximated by using this method.

\section{Preliminary and notation} 

In this section we give a brief introduction of graph theory and a review of orientable $PL$- surfaces. For more details see \cite{Wh} and \cite{FoMa}.

\subsection{Graphs}

A \textit{graph} $G$ is an ordered pair $(V(G),E(G))$, where $V(G)$ is a finite non-empty set of vertices and $E(G)$ is a set of pairs $uv$ of distinct elements $u,v$ in $V(G)$, called edges. If $x=uv \in E(G)$ we say that $v$ and $v$ are \textit{adjacent} or \textit{neighbors} vertices, and that vertex $u$ or $v$ and the edge $x$ are \textit{incident} with each other. The \textit{valency}, $d(v,G)$, of a vertex $v$ is the number of edges of $G$ incident to $v$.  

Now, we will consider ``graphs'' with ``edges'' of the form $vv$, called \textit{loops} and  \textit{multiple edges}, that are edges that appear more than once in $E(G)$.

A graph $H$ is said to be a \textit{sub-graph} of a graph $G$ if $V(H)\subset V(G)$ and $E(H)\subset E(G)$. Let $S\subset V(G)$, the \textit{induced sub-graph} $\left\langle S\right\rangle$ is the maximal sub-graph $H$ of $G$ with $V(H)=S$.     

\begin{definition}
A \textit{walk}, $P(u,w)$, in a graph $G$ is an ordered sequence of edges written as the linear combination $x_1+x_2+\cdots +x_p$, where $x_i=v_{i-1}v_i$, $i=1,2,...,n$, $u=v_1$ and $w=v_p$. The number $p$ is called the \textit{length} of the walk. A walk is said to be \textit{closed} if $v_n = v_0$, otherwise it is called \textit{open}. The walk is called a \textit{trail} if all its edges are distinct and a \textit{path} if all the vertices are distinct. 
\end{definition}

A graph $G$ is said to be \textit{connected} if for every couple of vertices $v$ and $w$, there exist a path $p(v,w)$. The minimal length of all paths, $p(v,w)$, is called the \textit{distance} between $v$ and $w$, denoted by $\rho_{G}(v,w)$.  

The \textit{first barycentric subdivision of a graph $G$} is obtained by adding a new vertex in each edge of $G$. We define the \textit{$k^{th}$-barycentric subdivision of $G$}, denoted $G^{(k)}$, as the first barycentric subdivision of $G^{(k-1)}$.

A \textit{digraph} is a graph with oriented edges. In this case we use the notation $[u,w]$ to denote the edge $uw$ oriented  from $u$ to $w$. Since, all the definitions and results given in this paper are true for both, graphs and digraphs, we will use the word digraph to refer, undistinguished, graphs and digraphs. 

\begin{definition}
Two digraphs $G=(V_G,E_G)$ and $H=(V_H,E_H)$ are said to be \textit{isomorphic} if there exists an one-to-one and onto function $\nu : V_G \rightarrow V_H$ with $\left(\left[u,w\right] \in E_G \Leftrightarrow \left[\nu(u),\nu(w)\right] \in E_H \right)$ and $\left(\left[u,w\right] \in E_H \Leftrightarrow \left[\nu^{-1}(u),\nu^{-1}(w)\right] \in E_G \right)$. 

They are said to be \textit{homeomorphic} if they have isomorphic barycentric subdivisions.
\end{definition}    

Let $G$ be a digraph and let $v$ a vertex of $G$. The \textit{star} of $G$ at $v$, denoted $ost(v,G)$, is the sub-digraph of $G$ whose vertices are $v$ and all its neighbors and its edges are those incident to $v$.  The \textit{closed star} of $G$ at $v$, denoted by $st(v,G)$, is the sub-digraph of $G$ induced by $v$ and all its neighbors. A \textit{star-digraph} is a digraph that is isomorphic to $ost(v,G)$, where $G$ is a digraph and $v$ is a vertex of $G$.  

\begin{definition}
A \textit{realization of a digraph $G$} on the sphere $\mathbb{S}^{2}$ is a pair $R(G)= (\stackrel{\rightarrow}{V}(G),\stackrel{\rightarrow}{E}(G))$, where $\stackrel{\rightarrow}{V}(G)$ is a set of points in $\mathbb{S}^{2}$ and $\stackrel{\rightarrow}{E}(G)$ is a set of oriented simple curves on $\mathbb{S}^{2}$, endowed with two one-to-one and onto functions $f:V(G) \rightarrow \stackrel{\rightarrow}{V}(G)$, with $f(v)=\stackrel{\rightarrow}{v}$ and $g:E(G) \rightarrow \stackrel{\rightarrow}{E}(G)$, with $g(e)=\stackrel{\rightarrow}{e}$, such that if $e=[v_i,v_j] \in E(G)$, then $\stackrel{\rightarrow}{e}$ is oriented from $\stackrel{\rightarrow}{v}_i$ to $\stackrel{\rightarrow}{v}_j$.

If every $\stackrel{\rightarrow}{e}_i$ and $\stackrel{\rightarrow}{e}_j$ are disjoint or only intersect at their ends points, we say that $R(G)$ is a \textit{planar realization} of $G$.
\end{definition}

If a digraph $G$ has a non planar realization $R(G)$, we assume, without lost of generality, that for every $\stackrel{\rightarrow}{e}_i, \stackrel{\rightarrow}{e}_j \in \stackrel{\rightarrow}{E}(G)$, the number of elements of $\stackrel{\rightarrow}{e}_i \cap \stackrel{\rightarrow}{e}_j$ is finite and every $c \in \stackrel{\rightarrow}{e}_i \cap \stackrel{\rightarrow}{e}_j$, not end points, is of the form $\begin{array}{c} \includegraphics[scale=0.15]{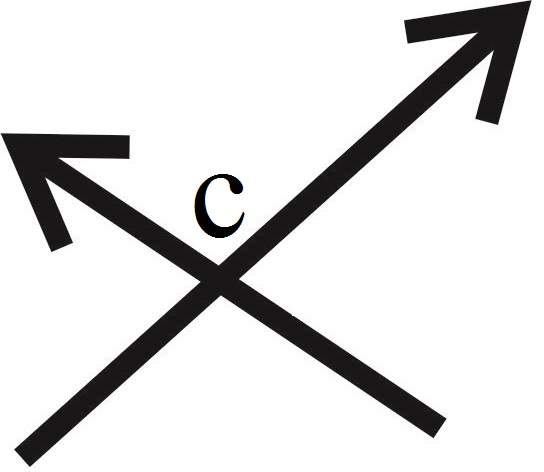} \end{array}$. These type of intersection points are called \textit{virtual crossings} and $R(G)$ is called \textit{virtual realization} of $G$. 

The virtual realization of a star-digraph is called \textit{spider-digraph}.

\begin{definition}
Let $\mathcal{S}$ be the set of all spider-digraphs, $G$ a digraph and $R(G)$ a virtual realization of $G$. A \textit{map form} on $R(G)$ is a function $f : \stackrel{\rightarrow}{V}(G) \rightarrow \mathcal{S}$ such that $f(\stackrel{\rightarrow}{v})\ddot{=}ost(\stackrel{\rightarrow}{v},R(G))$, where $\ddot{=}$ means equal up to isotopic transformations of $\mathbb{S}^{2}$ and $f$ induces an isomorphism between $ost(\stackrel{\rightarrow}{v},R(G))$ and $f(\stackrel{\rightarrow}{v})$.   

\end{definition}

Now, we will consider the following definition.

\begin{definition}
 For a digraph $G$, a \textit{virtual diagram} of $G$, denoted $D(G)$, is a pair $(R(G), f : \stackrel{\rightarrow}{V}(G) \rightarrow \mathcal{S} )$, where $R(G)$ is a virtual realization of $G$ and $f$ is a map form on $R(G)$.
\end{definition}

Let $D(G)$ be a virtual diagram $(R(G), f : \stackrel{\rightarrow}{V}(G) \rightarrow \mathcal{S} )$ of a graph $G$, and let $a$ and $b$ to be two non-vertices points on $R(G)$ living in the same edge $e(a,b)$ of $R(G)$. We take a sub-segment $A_{(a,b)}$ of $e(a,b)$ that joins the points $a$ and $b$. If $A_{(a,b)}$ only has virtual crossings or it does not have any crossing, then we remove $A_{(a,b)}$ from $e(a,b)$ and replace it by another segment $\widetilde{A}_{(a,b)}$ in $R(G)-A_{(a,b)}$ with the same end points such that if there would be intersecting points between $\widetilde{A}_{(a,b)}$ and $R(G)-A_{(a,b)}$, then they have to be virtual crossings. This process was called by L. Kauffman \cite{Ka} as \textit{detour moves}.

Note that detour moves neither change the set of vertices nor the map form of any virtual realization $R(G)$ of a digraph $G$.

\begin{definition}
Two virtual diagrams are isomorphic if and only if they can be changed into the other by using a finite sequence of detour moves.
\end{definition}

A \textit{flat virtual knot diagram} is a virtual diagram $(R(G),f_{i} : \stackrel{\rightarrow}{V}(G) \rightarrow \mathcal{S})$ such that for every $\stackrel{\rightarrow}{v}\in \stackrel{\rightarrow}{V}(G) $, $f(\stackrel{\rightarrow}{v})\ddot{=}$ 
$\begin{array}{c} \includegraphics[scale=0.7]{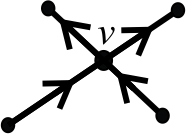} \end{array}$.

\subsection{$PL$-surfaces}

A \textit{$2$-polyhedron} is a compact subspace $\Sigma$ of $\mathbb{R}^{3}$ obtained by gluing finite triangles $\{\delta_1,...,\delta_k\}$ such that for every $i\neq j$, $\delta_i \cap \delta_j$ is either empty, or a common side, or a common vertex of $\delta_i$ and $\delta_j$.  The set of vertices of $\Sigma$, denoted by $V_{\Sigma}$, is called the $0$-skeleton of $\Sigma$, the set of edges of $\Sigma$, denoted by $\Gamma_{\Sigma}$, is called the $1$-skeleton of $\Sigma$ and the set of triangles of $\Sigma$, denoted by $F_{\Sigma}$, is called the $2$-skeleton of $\Sigma$.  

The \textit{closed star} of $\Sigma$ at $v$, denoted by $st(v,\Sigma)$, is the union of the triangles $\delta$ of $\Sigma$ having $v$ as a vertex. 

\begin{definition}
A \textit{$PL$-surface with holes} is a $2$-polyhedron such that the closed star of every vertex $v$ of $\Sigma$ is homeomorphic either to the closed $2$-disk with $v$ at the center or to the closed $2$-disk with $v$ on the boundary. These last type of vertices are called \textit{boundary vertices}.

If $\Sigma$ does not have boundary vertices, then $\Sigma$ is called \textit{closed $PL$-surface}.
\end{definition}
 
\begin{definition}
We say that a surface is orientable if we can choose an orientation of its edges and faces such that if an edge belongs to two adjacent faces then these faces travel the edge in opposite ways. 
\end{definition}	

In order to simplify the notation we will use the word \textit{surface} to refer both orientable $PL$-surface with holes or closed. 

An \textit{oriented path} on a surface $\Sigma$ from the vertex $v$ to the vertex $w$ is a path $p(v,w)$ in $\Gamma_{\Sigma}$ with edges coherently oriented. A \textit{$PL$-oriented circle} is an oriented closed path.

The \textit{link} of a vertex $v$ of $\Sigma$, denoted by $lk(v,\Sigma)$, is the path conformed by the edges opposite to $v$ in the triangles containing $v$. With this notation, $v$ is in the boundary of $\Sigma$ if and only if $lk(v,\Sigma)$ is a open path.  

The proof of the following theorem is well know, so we will omit it. 

\begin{theorem}
Let $\Sigma$ be a surface and $L_{\Sigma}$ be the set of boundary vertices of $\Sigma$. If $L_{\Sigma} \neq \emptyset$, then the boundary, $\partial_{\Sigma}$, of $\Sigma$ is conformed by a finite collection of $PL$-circles.
\end{theorem}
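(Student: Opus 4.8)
The plan is to realize $\partial_{\Sigma}$ as a $1$-dimensional subcomplex of $\Sigma$ and to show that it is a finite, compact $1$-manifold without boundary, hence a disjoint union of $PL$-circles. First I would fix the combinatorial description: call an edge $e\in\Gamma_{\Sigma}$ a \emph{boundary edge} if it is a side of exactly one triangle of $\Sigma$, and let $\partial_{\Sigma}$ be the subcomplex whose $1$-cells are the boundary edges and whose $0$-cells are their endpoints. The first step is to check that the vertex set of $\partial_{\Sigma}$ is exactly $L_{\Sigma}$. If $v\notin L_{\Sigma}$, then $st(v,\Sigma)$ is homeomorphic to a closed disk with $v$ at the center, so $lk(v,\Sigma)$ is a closed path and each side through $v$ of a triangle of the fan around $v$ is shared with a neighboring triangle of that fan; thus no boundary edge can be incident to an interior vertex. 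Conversely, from the half-disk model of $st(v,\Sigma)$ for $v\in L_{\Sigma}$ one sees at once that $v$ lies on at least one boundary edge.

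The core step is the local count: every $v\in L_{\Sigma}$ is incident to \emph{exactly} two boundary edges. Here I would use the hypothesis that $st(v,\Sigma)$ is homeomorphic to a closed $2$-disk with $v$ on the boundary. The triangles of $\Sigma$ containing $v$ can then be arranged in a finite list $\delta_1,\dots,\delta_k$ so that $\delta_i$ and $\delta_{i+1}$ share an edge through $v$ for $1\le i\le k-1$; this ``fan ordering'' is where the manifold condition is really used, as it rules out several half-disks meeting at $v$ and rules out the fan closing up into an annular cycle. The two remaining edges through $v$ — the side of $\delta_1$ not shared with $\delta_2$ and the side of $\delta_k$ not shared with $\delta_{k-1}$ — each lie on exactly one triangle, so they are boundary edges, while every interior edge of the fan lies on two triangles and so is not. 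Hence $v$ is incident to precisely two boundary edges.

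Having established that $\partial_{\Sigma}$ is a finite graph (finite because $\Sigma$ is compact and built from finitely many triangles, so $V_{\Sigma}$ and $\Gamma_{\Sigma}$ are finite) in which every vertex has valency $2$, I would invoke the elementary fact that such a graph is a disjoint union of cycles; each cycle, read as a closed path of edges of $\Sigma$, is a $PL$-circle, and $L_{\Sigma}\neq\emptyset$ guarantees there is at least one. This gives the asserted description of $\partial_{\Sigma}$ as a finite collection of $PL$-circles. The main obstacle is the fan-ordering statement in the core step: extracting from the bare homeomorphism $st(v,\Sigma)\cong$ (half-disk) the fact that the triangles around $v$ assemble into a single arc-shaped fan with exactly two free edges is the standard but slightly delicate combinatorial-manifold argument; once it is in hand, the rest is bookkeeping together with a well-known graph-theoretic fact, which is presumably why the paper omits the details.
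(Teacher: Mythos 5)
Your proof is correct and is precisely the standard argument the paper alludes to when it omits the proof as well known: the boundary edges (sides of exactly one triangle) form a finite graph whose vertex set is $L_{\Sigma}$ and in which every vertex has valency two, hence a disjoint union of cycles, each of which is a $PL$-circle. For the fan-ordering step you flag as the only delicate point, note that the paper has already recorded that $v\in L_{\Sigma}$ if and only if $lk(v,\Sigma)$ is an open path; reading the triangles at $v$ off the edges of that open path $u_0u_1\cdots u_k$ yields the linear fan immediately, with $vu_0$ and $vu_k$ the two boundary edges, so that step can be discharged by citing the link characterization rather than re-deriving it from the half-disk homeomorphism.
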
  

For an orientable surface $\Sigma$ we define the interior of $\Sigma$, denoted $int(\Sigma)$, as $int(\Sigma)=\Sigma - \partial_{\Sigma}$.

\begin{definition}\cite{Wh}
\label{defwh} Let $\Gamma$ be a finite digraph, with $V(\Gamma)=\{v_1,...,v_p\}$ and $E(\Gamma)=\{x_1,...,x_q\}$. Let $\Sigma$ be a connected surface. An \textit{embedding} of $\Gamma$ in $\Sigma$ is a subspace, $\Gamma(\Sigma)$, of $\Sigma$,
\begin{center}
$\Gamma(\Sigma)=\bigcup^{p}_{i=1} v_i(\Sigma) \cup \bigcup^{q}_{j=1} x_j(\Sigma)$,
\end{center}
for which there exist injective and onto functions $f: V(\Gamma) \rightarrow \{v_1(\Sigma),...,v_p(\Sigma)\}$ and $g: E(\Gamma) \rightarrow \{x_1(\Sigma),...,x_p(\Sigma)\}$ such that:  

(1) $v_1(\Sigma)$,...,$v_p(\Sigma)$ are distinct points of $int(\Sigma)$.

(2) $x_1(\Sigma)$,...,$x_q(\Sigma)$ are mutually disjoint open paths in $int(\Sigma)$,

(3) $x_j(\Sigma) \cap v_i(\Sigma)=\emptyset $, $i=1,2,...,p$; $j=1,2,...,q$ and

(4) if $v_{i_1}$ and $v_{i_2}$ are the end points of $x_i$, then $v_{i_1}(\Sigma)$ and $v_{i_2}(\Sigma)$ are the end points of $x_i(\Sigma)$.  

(5) if $x_i$ is oriented from $v_{i_1}$ to $v_{i_2}$, then $x_i(\Sigma)$ is oriented from $v_{i_1}(\Sigma)$ to $v_{i_2}(\Sigma)$. 

The subspace $\Gamma(\Sigma)$ is called a \textit{realization of $\Gamma$ on $\Sigma$}.
\end{definition}

For a virtual diagram $(R(G),f: \stackrel{\rightarrow}{V}(G) \rightarrow \mathcal{S})$ and a surface $\Sigma$, we define a \textit{surface realization of $R(G)$ on $\Sigma$} as a subspace 
\begin{center}
$R(G)(\Sigma)=\bigcup^{p}_{i=1} \stackrel{\rightarrow}{v}_i(\Sigma) \cup \bigcup^{q}_{j=1} \stackrel{\rightarrow}{x}_j(\Sigma)$ 
\end{center}
of $\Sigma$ that satisfy the conditions of the Definition \ref{defwh}, endowed with a map form 
\begin{center}
$\widetilde{f} : \{\stackrel{\rightarrow}{v}_1(\Sigma),...,\stackrel{\rightarrow}{v}_p(\Sigma)\} \rightarrow \mathcal{S}$,
\end{center}
such that $\widetilde{f}(\stackrel{\rightarrow}{v}_i(\Sigma))=f(\stackrel{\rightarrow}{v}_i)$, for every $i=1,2,...,p$. 

The proof of the following theorem can be found in \cite{Wh} and \cite{Ka}. 

\begin{theorem}
\label{emb} Every virtual diagram has a surface realization. 
\end{theorem}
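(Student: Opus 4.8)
The plan is to discard, for a moment, the ambient sphere and retain only the combinatorial skeleton encoded by the virtual diagram, and then to rebuild an ambient surface from that skeleton in the standard \emph{ribbon graph} (fat graph) fashion. Concretely, from $(R(G),f)$ I would extract two pieces of data: the underlying digraph $G$; and, for each vertex $\stackrel{\rightarrow}{v}\in\stackrel{\rightarrow}{V}(G)$, the cyclic arrangement of the edge-ends incident to $\stackrel{\rightarrow}{v}$ together with their orientations, as recorded by the spider-digraph $f(\stackrel{\rightarrow}{v})\ddot{=}ost(\stackrel{\rightarrow}{v},R(G))$. The virtual crossings of $R(G)$ carry no combinatorial content --- they are exactly what passing to a surface is meant to eliminate --- so they are simply ignored. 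This data amounts to a rotation system on $G$.

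First I would realize this rotation system by a compact orientable $PL$-surface with holes $\Sigma_0$. Assign to every vertex a closed $2$-disk, to every edge a band $[0,1]\times[0,1]$, and glue the ends $\{0\}\times[0,1]$ and $\{1\}\times[0,1]$ of each band to pairwise disjoint sub-arcs of the boundaries of the appropriate vertex-disks, inserting the ends around the boundary circle of each disk in precisely the cyclic order dictated by $f(\stackrel{\rightarrow}{v})$ (for a loop $vv$ both ends go to the same disk, at two disjoint arcs). I then triangulate the result so that $\Sigma_0$ is a $PL$-surface with holes, and I orient every disk and every band compatibly with the orientation that $\mathbb{S}^{2}$ induces on the local models $ost(\stackrel{\rightarrow}{v},R(G))$. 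Because all the spider-digraphs arise from a single global orientation of $\mathbb{S}^{2}$, these local choices are mutually consistent, so $\Sigma_0$ is orientable.

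Next I would place $G$ inside $\Sigma_0$: send each vertex to the centre of its disk, and each oriented edge to the arc formed by the core $[0,1]\times\{1/2\}$ of its band together with straight segments joining the band-ends to the two incident disk centres, oriented as prescribed. One checks that the resulting subspace $R(G)(\Sigma_0)$ satisfies conditions (1)--(5) of Definition \ref{defwh}: the vertex-points lie in $int(\Sigma_0)$, the edge-arcs are mutually disjoint open paths meeting the vertex set only at the correct endpoints, and incidences and orientations hold by construction. The cyclic order in which these arcs leave a vertex-disk is, again by construction, exactly $f(\stackrel{\rightarrow}{v})$, so setting $\widetilde{f}(\stackrel{\rightarrow}{v}(\Sigma_0)):=f(\stackrel{\rightarrow}{v})$ gives the map form induced by $R(G)(\Sigma_0)$; hence $(R(G)(\Sigma_0),\widetilde{f})$ is a surface realization of $R(G)$. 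If a closed ambient surface is desired instead, one glues a $2$-disk along each boundary circle of $\Sigma_0$, obtaining a closed orientable $PL$-surface carrying the same realization. Since only the combinatorial data of $(R(G),f)$ was used, and every virtual diagram supplies it, this proves the statement.

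The only points genuinely requiring care are the orientability of $\Sigma_0$ and the verification that the reconstructed rotation around each vertex is \emph{literally} the prescribed spider-digraph rather than its reverse; both are settled by forcing the band- and disk-orientations to agree with the orientation of $\mathbb{S}^{2}$ through the given local pictures $ost(\stackrel{\rightarrow}{v},R(G))$. The remaining work --- triangulating the ribbon graph so as to stay in the $PL$ category, and bookkeeping the bijections $f,g$ of Definition \ref{defwh} --- is routine.
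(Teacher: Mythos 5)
Your construction is correct and is essentially the standard rotation-system/ribbon-graph argument; the paper itself omits the proof and simply defers to \cite{Wh} and \cite{Ka}, where exactly this band-decomposition construction (Edmonds' rotation principle in White, abstract/band surfaces of diagrams in Kauffman) is carried out, with the virtual crossings discarded just as you do. The only detail worth adding is that when $G$ is disconnected the banded surface has one component per component of $G$, so it must be made connected (e.g.\ by a tube joining components) to meet the connectedness requirement in Definition \ref{defwh}.
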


The genus, $g(\Gamma)$, of a digraph is the minimum genus among all the closed and connected surfaces on which $\Gamma$ can be embedded. The virtual genus, $g_v(R(G))$, of a virtual diagram $(R(G),f: \stackrel{\rightarrow}{V}(G) \rightarrow \mathcal{S})$ is the minimum genus among all the surfaces on with it can be realizable.  

The first barycentric subdivision of triangle is the change of this by six new triangles obtained by the construction of the three medians of $\delta$. The \textit{first barycentric subdivision of a surface $\Sigma$} is the result of the first barycentric subdivision of all the faces of $\Sigma$, it is denoted by $\Sigma^{(1)}$. By a recurrence process we define the \textit{$k^{th}$-barycentric subdivision of $\Sigma$}, denoted $\Sigma^{(k)}$, as the first barycentric subdivision of $\Sigma^{(k-1)}$.  

\begin{definition}
Let $\Sigma$ and $\widetilde{\Sigma}$ be two surfaces. A continuous function $f: \Sigma \rightarrow \widetilde{\Sigma}$ is called a \textit{simplicity function} if $f$ sends each face of $\Sigma$ onto a face of $\widetilde{\Sigma}$. We say that $\Sigma$ is \textit{embedded} in $\widetilde{\Sigma}$, if there exist barycentric subdivisions, $\Sigma^{(k)}$ and $\widetilde{\Sigma}^{(n)}$, of $\Sigma$ and $\widetilde{\Sigma}$, respectively, and a simplicity and injective function $g: \Sigma^{(k)} \rightarrow \widetilde{\Sigma}^{(n)}$. In the case that $g(\Sigma^{(k)})=\widetilde{\Sigma}^{(n)}$ and $g^{-1}$ is also simplicity, we say that $\Sigma$ and $\widetilde{\Sigma}$ are \textit{homeomorphic}. 
\end{definition}

Let $\Sigma$ and $\widetilde{\Sigma}$ be two surfaces. We said that an embedding $f: \Sigma^{(k)} \rightarrow \widetilde{\Sigma}^{(n)}$ preserves the orientation if $f$ preserves the orientation of edges and triangles.

\section{Signed Gauss paragraphs}
In this section we give a short introduction to the study of oriented normal curves on surfaces and signed Gauss paragraphs. We present an algorithm to construct an infinite collection of realizations for a given signed Gauss paragraph.

\subsection{$PL$-normal curves}

To start this section we give the following definition. 

\begin{definition}

\label{nor} Let $\Sigma$ be a surface. An oriented closed trail $\gamma$ in $\Gamma_{\Sigma}$ is called an \textit{oriented $PL$-normal curve} if the set of vertices $V_{\gamma}=\{v_{j_{i}} \mid i=1,2,...,p\}$ of $\gamma$ can be split into two disjoint subsets $C_{\gamma}=\{a_1,...,a_n\}$ and $O_{\gamma}=\{v\in V_{\gamma} \mid v \notin C_{\gamma} \} $ such that $d(a_i,\gamma)=4$, $i=1,2,...,n$ and, for every $v\in O_{\gamma}$, $d(v,\gamma)=2$. Moreover $\gamma \cap st(a_i,\Sigma)$ looks like the Figure \ref{cross}-$a$ shows.

\begin{figure}[ht]
\begin{center}
\includegraphics[scale=0.5]{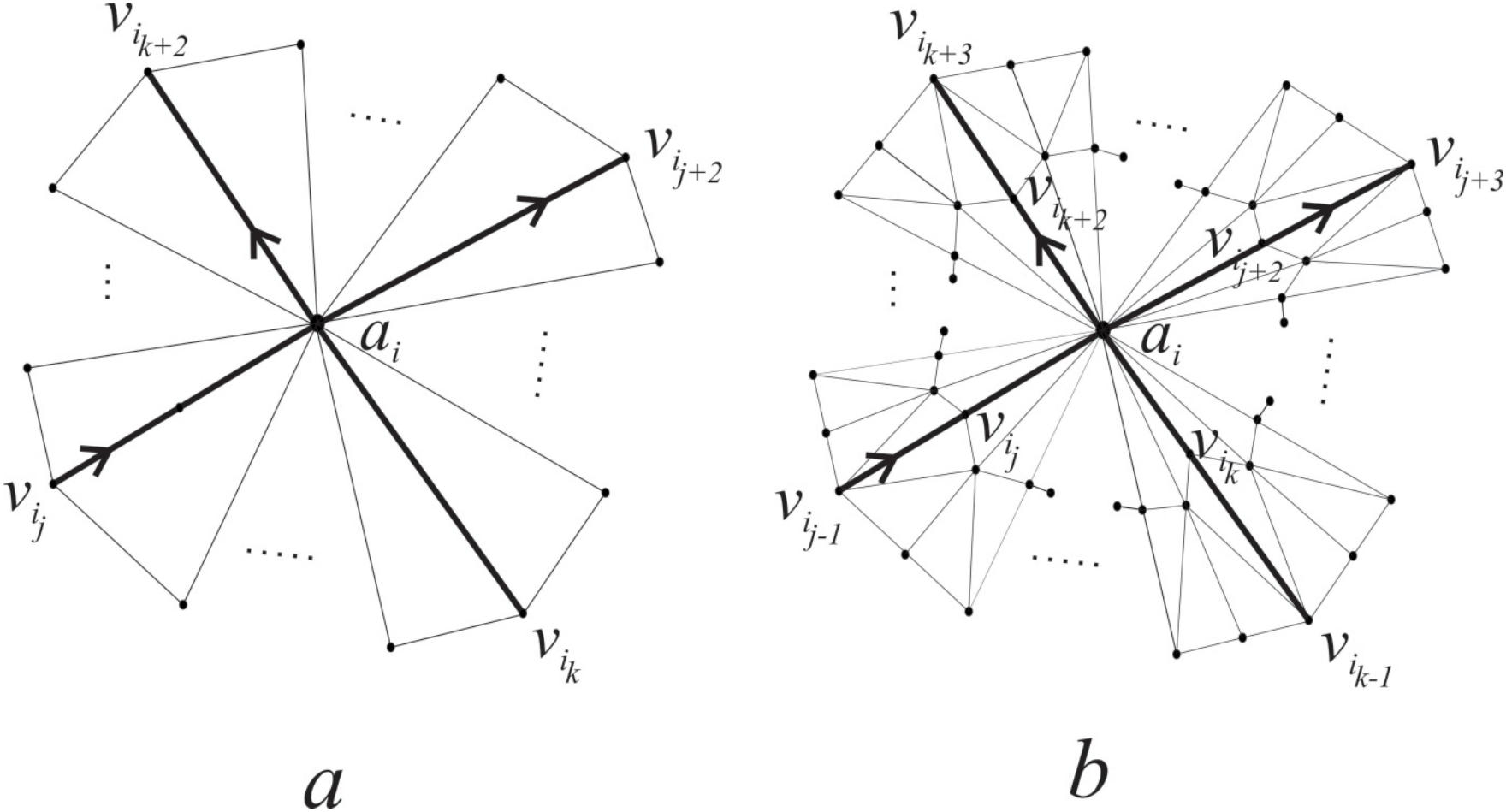}
\caption{Crossings points}
\label{cross}
\end{center}
\end{figure}

\end{definition}

We classify the elements of $V_{\gamma}$ as \textit{crossings points} if they belong to $C_{\gamma}$ or \textit{ordinary points} if they belong to $O_{\gamma}$ and we say that $\gamma$ travels the crossing point $a_i$ positively (negatively)  if this is traveled from the vertex $v_{i_{k}}$ (from $v_{i_{j}}$).

\begin{definition}
An \textit{oriented $PL$-normal curve of $k$ components} on a surface $\Sigma$ is a collection of $k$ oriented $PL$-normal curves $\beta=\gamma_1 \cup...\cup\gamma_k$ such that if $\gamma_i \cap \gamma_j \neq \emptyset$, then $\left|\gamma_i \cap \gamma_j\right|$ is finite and for every $c \in \gamma_i \cap \gamma_j$, $c$ is of the form described in Figure \ref{cross}.

We denote $C^{(\beta)}=\bigcup_{i\neq j} \gamma_i \cap \gamma_j$ and $C_{\beta}=C_{\gamma_1}\cup \cdots \cup C_{\gamma_k}\cup C^{(\beta)}$.
\end{definition}

If there are crossings points $a_i$ and $a_j$ in $C_{\beta}$ such that $\rho_{\beta}(a_i,a_j)=2$, then $\rho_{\beta^{(2)}}(a_i,a_j)=4$, see Figure \ref{cross}. Therefore we may assume without loss of generality that $\rho_{\beta}(a_i,a_j)>2$, for every $a_i$, $a_j$ in $C^{(\beta)}$.

We will denote the set of all $(\Sigma,\gamma)$, where $\gamma$ is an oriented $PL$-normal curve on a connected surface $\Sigma$, by $\mathcal{D}$. 

Let $(\Sigma,\gamma) \in \mathcal{D}$. Then, for every $k \in \mathbb{Z}$, the $k^{th}$ barycentric subdivision of $\Sigma$ induces a subdivision of $\gamma$, denoted by $\gamma^{(k)}$, and so $(\Sigma^{(k)},\gamma^{(k)})\in \mathcal{D}$. 

\begin{definition}
Let $(\Sigma,\gamma)$ and $(\widetilde{\Sigma},\lambda)$ in $\mathcal{D}$, 

(1) we say that $(\Sigma,\gamma)$ is \textit{geotopic} to $(\widetilde{\Sigma},\lambda)$, denoted $(\Sigma,\gamma) \approx (\widetilde{\Sigma},\lambda)$, if there exists a surface $Z$ and orientation preserving injective simplicity functions $f : \Sigma^{(k)} \rightarrow Z^{(m)}$ and $g:\widetilde{\Sigma}^{(n)} \rightarrow Z^{(m)}$ such that $f(\gamma^{(k)})=g(\lambda^{(n)})$.   

(2) We say that $(\Sigma,\gamma)$ is \textit{stably grotopic} to $(\widetilde{\Sigma},\lambda)$ if there exist a finite collection $(\Sigma_i,\gamma_i)\in \mathcal{D}$, $i=1,2,...,t$, such that 
\begin{center}
$(\Sigma,\gamma) \approx (\Sigma_{1},\gamma_{1}) \approx \cdots \approx (\Sigma_{t},\gamma_{t})\approx (\widetilde{\Sigma},\lambda)$.
\end{center}

\end{definition}

The geotopic relation is not a equivalence relation whereas stably geotopic is, see \cite{CaEl}.

\subsection{Signed Gauss paragraphs}

A \textit{signed paragraph} is a finite collection of words $\{w_1,...,w_n\}$ in some alphabet of the form $S=\{a_1,...,a_n,a^{-1}_1,...,a^{-1}_n\}$. A \textit{signed Gauss paragraph} is a signed paragraph such that every letter of $S$ occurs just once in a single word component $w_{j}$ of $w$.
  
\begin{definition}
Two signed Gauss paragraphs are said to be \textit{isomorphic} if one of them can be changed into the other by a finite sequence of the following transformations: (1) cyclic permutations of the letters of any word component of the paragraph, (2) change of alphabets and (3) reorganizing the components of paragraph.
\end{definition}

\underline{Constructing signed Gauss paragraphs:}
Let $(\Sigma,\gamma)\in \mathcal{D}$ be an oriented $PL$-normal curve of $n$ components, $\gamma_1,...,\gamma_n$, and $k$ crossings labeled with the letters $a_{1}$,...,$a_{k}$. We choose a component, $\gamma_{j}$, of $\gamma$ and an ordinary point $x_{j}$ on $\gamma_j$. Now, we follow the curve $\gamma_{j}$ writing down the list of crossing labels, denoted by $w_{\gamma_{j}}$, with the convention that we add $a_{i}$ (or $a^{-1}_{i}$) if the crossing is traveled positively (or negatively). 

If $n\geq 2$, the collection $w_{\gamma}=\{w_{\gamma_{1}},..,w_{\gamma_{n}}\}$ is called a \textit{signed Gauss paragraph of $\gamma$}, but if $n=1$, then $w_{\gamma}$ is called a \textit{signed Gauss word}.

With the notation above $w_{\gamma}=w_{\gamma^{(k)}}$ for every barycentric subdivision, $(\Sigma^{(k)},\gamma^{(k)})$, of $(\Sigma,\gamma)$. The proof of the following lemma can be found in \cite{Ca} and \cite{CaEl}.

\begin{lemma}
If $(\Sigma,\gamma)$ is stably geotopic to $(\widetilde{\Sigma},\lambda)$ if and only if $w_{\gamma}$ is isomorphic to $w_{\lambda}$.
\end{lemma}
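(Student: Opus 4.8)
The claim is the equivalence ``$(\Sigma,\gamma)$ stably geotopic to $(\widetilde\Sigma,\lambda)$ $\iff$ $w_\gamma$ isomorphic to $w_\lambda$.'' The plan is to prove each implication separately, reducing everything to the local picture of a normal curve near a crossing (Figure~\ref{cross}).

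\emph{The forward direction.} First I would check that the signed Gauss paragraph is an invariant of the geotopy relation $\approx$, and then, since stable geotopy is the transitive closure of $\approx$ through finitely many members of $\mathcal D$, invariance under $\approx$ immediately gives invariance under stable geotopy. So the real content is: if $(\Sigma,\gamma)\approx(\widetilde\Sigma,\lambda)$ then $w_\gamma\cong w_\lambda$. By definition there is a surface $Z$ and orientation-preserving injective simplicity maps $f:\Sigma^{(k)}\to Z^{(m)}$, $g:\widetilde\Sigma^{(n)}\to Z^{(m)}$ with $f(\gamma^{(k)})=g(\lambda^{(n)})$. Using the already-recorded fact that $w_\gamma = w_{\gamma^{(j)}}$ for every barycentric subdivision, I may replace $\gamma$ by $\gamma^{(k)}$ and $\lambda$ by $\lambda^{(n)}$, so that $\gamma$ and $\lambda$ have literally the same image $\gamma' := f(\gamma^{(k)}) = g(\lambda^{(n)})$ inside $Z^{(m)}$. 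Then I would argue that $f$ carries crossing points to crossing points and ordinary points to ordinary points (it is injective, simplicial, and carries the local cross picture to a local cross picture), and because $f$ is orientation-preserving it carries the ``positive/negative'' labelling of the two strands through a crossing to the corresponding labelling in $Z^{(m)}$; hence the word read along any component of $\gamma$ agrees, up to the choice of basepoint and relabelling of the alphabet (a cyclic permutation and a change of alphabet), with the word read along the corresponding component of $\gamma'$. The same holds for $g$ and $\lambda$. Composing, $w_\gamma$ and $w_\lambda$ differ only by a cyclic permutation in each word, a change of alphabet, and a reordering of components --- precisely the moves generating isomorphism of signed Gauss paragraphs.

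\emph{The converse direction.} Here I would show that each of the three isomorphism moves on signed Gauss paragraphs can be realized by a stable geotopy. Changing the alphabet and reordering the components do not change the pair $(\Sigma,\gamma)$ at all (they only rename the crossing labels and list the components in a different order), so they are trivially realized by the identity geotopy. A cyclic permutation of a word component $w_{\gamma_j}$ corresponds to moving the basepoint $x_j$ along $\gamma_j$ past one crossing; since the underlying pair $(\Sigma,\gamma)$ is unchanged and the signed Gauss paragraph as defined does not actually depend on the choice of basepoint up to cyclic permutation, this too is realized by the identity. Thus isomorphic signed Gauss paragraphs that happen to come from curves $\gamma,\lambda$ already give the \emph{same} pair in $\mathcal D$ after relabelling, hence are (stably) geotopic. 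The one genuinely nontrivial point is that two \emph{a priori different} realizations of the same abstract signed Gauss paragraph are stably geotopic --- but this is exactly where I would invoke that the paper cites this lemma to \cite{Ca} and \cite{CaEl}: the construction there builds, from the combinatorial data of $w$, a canonical pair to which every realization of $w$ is stably geotopic, by doing surgery/handle attachments dictated by the word. I would sketch this by building the canonical realization out of one ``crossing gadget'' per letter glued along bands according to the cyclic order in each word component, and observing that any other realization differs from it by compressions and stabilizations inside a common $Z$, each of which is a $\approx$-move.

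\emph{Main obstacle.} The hard part is the converse's nontrivial half: verifying that any two surface realizations of the same abstract paragraph $w$ are linked by a \emph{finite} chain of $\approx$-moves through $\mathcal D$. This requires a normal-form argument --- essentially that the combinatorial paragraph determines the regular-neighbourhood pair $(N(\gamma),\gamma)$ uniquely up to orientation-preserving homeomorphism, and that the complementary regions can be standardized by adding or removing handles (stabilizations), each step staying within $\mathcal D$. I would handle this by explicit local moves near crossings and along the bands joining them, but since the excerpt explicitly attributes the lemma to \cite{Ca} and \cite{CaEl}, I would at minimum reduce to their construction rather than re-derive it in full.
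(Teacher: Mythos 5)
The paper does not actually prove this lemma: immediately before the statement it says ``The proof of the following lemma can be found in \cite{Ca} and \cite{CaEl},'' so there is no in-paper argument to measure your proposal against. Your outline is consistent with what those sources do. The forward direction is, as you say, a routine check: an orientation-preserving injective simplicity map identifying $\gamma^{(k)}$ with $\lambda^{(n)}$ inside a common $Z^{(m)}$ carries crossings to crossings and preserves the sign with which each strand traverses a crossing, so the read-off words agree up to basepoint, alphabet, and ordering of components, and invariance under a finite chain of $\approx$-moves follows by induction on the chain. You also correctly isolate where all the content of the converse lives: the three isomorphism moves cost nothing, and the real claim is that any two realizations of the same abstract paragraph are joined by a finite chain of geotopies, i.e.\ that $w$ determines the regular-neighbourhood pair $(N(\gamma),\gamma)$ up to orientation-preserving homeomorphism and that the ambient surface can then be standardized by stabilization. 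That is exactly Carter's construction, and it is also in effect what this paper builds later (the surfaces $\Omega^{(n)}_{\Sigma}$ of Theorem~\ref{min} and the uniqueness statement of Proposition~\ref{propo} are that canonical neighbourhood and that normal form). As a self-contained proof your write-up is incomplete at precisely that step---you gesture at the crossing-gadget-and-bands normal form without carrying it out---but since the paper itself defers the entire lemma to \cite{Ca} and \cite{CaEl}, your proposal is no less complete than the text it accompanies, and the reduction you describe is the correct one.
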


\begin{definition}
A signed Gauss paragraph $w$ is said to be \textit{realizable} if there exists $(\Sigma,\gamma)\in \mathcal{D}$ such that $w_{\gamma}=w$. If $\Sigma=S^{2}$, then $w$ is called \textit{geometric signed Gauss paragraph}. 
\end{definition}

We assume without loss of generality that signed Gauss paragraphs are of the from $w=\{w_1,...,w_n\}$, with $w_i=a^{e_{i_{1}}}_{i_1}...a^{e_{i_{k(i)}}}_{i_{k(i)}}$, and every $w_i$ and $w_j$ are not disjoints. 

The following definition was taken from \cite{Ca}.

\begin{definition}[Carter circles]
\label{car} A \textit{Carter circle}, $c(w)$, of $w$ is a subset of 
\begin{center}
$E(w)=\bigcup^{n}_{i=1}\{\pm [a^{e_{i_{1}}}_{i_{1}},a^{e_{i_{2}}}_{i_{2}}],...,\pm [a^{e_{i_{k(i)}}}_{i_{k(i)}},a^{e_{i_{1}}}_{i_{1}}]\}$, 
\end{center}
such that
\begin{enumerate}
\item If $[a^{e_{t}}_{t},a^{e}] \in c(w)$, then $-sign(e)[a^{e_{g}}_{g},a^{e_{h}}_{h}] \in c(w)$, where $a^{e_{h}}_{h}=a^{-1}$ if $e=1$ and $a^{e_{g}}_{g}=a$ if $e=-1$,
\item If $-[a^{e},a^{e_{f}}_{f}] \in c(w)$, then $sign(e)[a^{e_{g}}_{g},a^{e_{h}}_{h}] \in c(w)$, where $a^{e_{h}}_{h}=a$ if $e=-1$ and $a^{e_{g}}_{g}=a^{-1}$ if $e=1$.
\end{enumerate}
\end{definition}

The next proposition resumes some properties of the Carter's circles.

\begin{proposition}
Let $B(w)$ be a set of  Carter's circles. Then,

(1) for every $c_1(w)$ and $c_2(w)$ in $B(w)$, $c_1(w) \cap c_2(w) =\emptyset$,

(2) $B$ is maximal if and only if $E(w)=\bigcup_{c(w)\in B(w)} c(w)$.

(2) Let $C(w)$ be the maximal set of Carter's circles, then the cardinal, $b(w)$, of $C(w)$ is invariant under isomorphisms of $w$.
\end{proposition}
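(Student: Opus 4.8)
The plan is to compress the two closure conditions of Definition~\ref{car} into a single \emph{successor function} $\sigma$ on $E(w)$ and then read off the three assertions from elementary facts about permutations. Condition~(1) applies to the positive elements $+[\cdot,\cdot]$ of $E(w)$ and condition~(2) to the negative ones $-[\cdot,\cdot]$, so together they apply to every element, and in each case the condition names exactly one further element forced to lie in $c(w)$; write $\sigma(x)$ for that element. That $\sigma\colon E(w)\to E(w)$ is well defined uses only the defining property of a Gauss paragraph, namely that each signed letter $a^{\pm1}$ occurs exactly once and hence has a unique predecessor and a unique successor in its word component (read cyclically); this is what makes a prescription such as ``the edge $[a_g^{e_g},a_h^{e_h}]$ with $a_h^{e_h}=a^{-1}$'' denote a single element of $E(w)$.

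Next I would show that $\sigma$ is a bijection. Splitting into the four cases given by the sign of the element and by $\mathrm{sign}(e)$ and tabulating $\sigma(x)$, the four output families turn out to be: negative edges ending at an inverse letter, negative edges ending at a non-inverse letter, positive edges starting at an inverse letter, and positive edges starting at a non-inverse letter. Since a positive edge is classified by whether its initial letter is inverse, and a negative edge by whether its terminal letter is inverse, these four families exhaust $E(w)$ without overlap, and from the family of an output element one recovers its unique preimage; hence $\sigma$ is onto, and an onto self-map of a finite set is a bijection. I expect this verification to be the only genuine work in the proof: Definition~\ref{car} is written asymmetrically in the two signs, and one must check with care that the rule leaves no element of $E(w)$ without a preimage and assigns no two elements the same successor.

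Granting that $\sigma$ is a permutation, a non-empty $c\subseteq E(w)$ is closed under the rules of Definition~\ref{car} exactly when $\sigma(c)\subseteq c$, equivalently, since $E(w)$ is finite, when $c$ is a union of $\sigma$-orbits; so the Carter circles — the minimal such sets, as the word \emph{circle} and assertion~(1) signal — are precisely the orbits of $\sigma$. Then disjointness of two distinct Carter circles is just disjointness of two distinct orbits of a permutation. For the maximality criterion: the orbits partition $E(w)$, so for a family $B(w)$ of Carter circles (pairwise disjoint by the first part) one has $\bigcup_{c\in B(w)}c=E(w)$ iff $B(w)$ consists of all orbits; and $B(w)$ fails to be inclusion-maximal exactly when some orbit is missing, that orbit being disjoint from $\bigcup_{c\in B(w)}c$ and hence adjoinable. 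In particular the maximal set $C(w)$ is the set of all $\sigma$-orbits and $b(w)$ is their number.

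For the isomorphism-invariance of $b(w)$ it is enough to handle the three moves defining isomorphism of signed Gauss paragraphs. A cyclic permutation of a word component and a reordering of the components leave the cyclic lists of signed letters, and therefore $E(w)$ and $\sigma$, literally unchanged. A change of alphabet $a_i^{\pm1}\mapsto b_{\pi(i)}^{\pm1}$ preserves, within each component, the cyclic order, the $\pm$ label, and the property of a letter of being inverse; since $\sigma$ was described purely in those terms, the induced bijection $[x,y]\mapsto[\varphi(x),\varphi(y)]$ of $E(w)$ conjugates $\sigma$ to the successor function of the new paragraph. A conjugating bijection carries orbits bijectively to orbits, so $b(w)$ is preserved; composing over a finite sequence of such moves gives the invariance.
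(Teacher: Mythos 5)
Your argument is correct, and it supplies something the paper in fact omits: the proposition is stated as a summary of properties and never proved, so the only thing to compare against is the implicit geometric picture. Your reduction of Definition~\ref{car} to a successor permutation $\sigma$ on $E(w)$ is the right combinatorial formalization, and the four-family bookkeeping does check out (inputs are positive edges indexed by their terminal letter and negative edges by their initial letter, outputs the reverse, each family in bijection with the $n$ crossing labels, so $\sigma$ is onto and hence bijective); this $\sigma$ is exactly the combinatorial shadow of the ``always turn left'' rule the paper later uses in the proof of Theorem~\ref{min}(2) to identify Carter circles with boundary components of $\Omega^{(k)}_{\Sigma}$. The one step you should make fully explicit rather than leave as a parenthetical is the interpretive one: as literally written, Definition~\ref{car} only demands closure under $\sigma$, so the empty set and arbitrary unions of orbits would qualify and part (1) would then be false; the proposition holds only under your reading that a Carter circle is a single non-empty minimal closed set, i.e.\ one orbit of $\sigma$. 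With that reading fixed, (1) and (2) are immediate facts about orbits of a permutation of a finite set, and your move-by-move verification of the invariance in (3) --- cyclic permutation and component reordering leave the cyclic adjacency data untouched, and a relabelling conjugates $\sigma$ --- is complete.
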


Let $w$ be a signed Gauss paragraph and $\Gamma_{w}$ be the digraph such that $V(\Gamma_w)=\{a_{1},...,a_{n}\}$ and $E(\Gamma_w)=\bigcup^{n}_{i=1}\{[a^{e_{i_{1}}}_{i_{1}},a^{e_{i_{2}}}_{i_{2}}],...,[a^{e_{i_{k(i)}}}_{i_{k(i)}},a^{e_{i_{1}}}_{i_{1}}]\}$,  where $[a^{e_{i_{t}}}_{i_{t}},a^{e_{i_{t+1}}}_{i_{t+1}}]$ denotes the edge $a_s a_l$ oriented from $a_{s}$ to $a_{l}$ if and only if $a_{i_{t}}=a_{s}$ and $a_{i_{t+1}}=a_{l}$. 

Since, for every $a_i \in V(\Gamma_w)$ there are edges of the form $[c,a^{-1}_i]$, $[a^{-1}_i,b]$, $[a_i,d]$ and $[c,a_i]$, then we can construct a flat virtual knot diagram $(K_w=R(\Gamma_w),f: \stackrel{\rightarrow}{V}(\Gamma_w) \rightarrow \mathcal{S}$ of $\Gamma_w$ such that for every $\stackrel{\rightarrow}{a}_i \in \stackrel{\rightarrow}{V}(\Gamma_w)$. 

\begin{figure}[ht]
\begin{center}
$f(\stackrel{\rightarrow}{a}_i)\ddot{=}$ $\begin{array}{c}
 \includegraphics[scale=1.1]{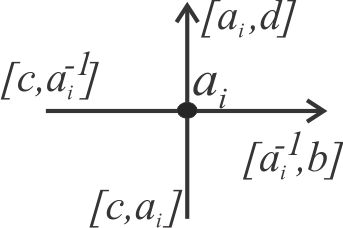}
\end{array}$
\caption{Vertex organizing}
\label{reali}
\end{center}
\end{figure} 

If $w=\{w_1,...,w_m\}$, with $w_i=a^{e_{i_{1}}}_{i_1}...a^{e_{i_{k(i)}}}_{i_{k(i)}}$, then $K_w$ has $m$ components $K_{w_{i}}$, $i=1,2,...,m$ and we write

\begin{center}
$K_{w_i}=[a^{e_{i_{1}}}_{i_1},a^{e_{i_{2}}}_{i_2}]+...+[a^{e_{i_{k(i)}}}_{i_{k(i)}},a^{e_{i_{1}}}_{i_1}]$.
\end{center} 

From Theorem \ref{emb}, there exist  a surface $\Sigma$ and a realization $D_w$ of $K_w$ on $\Sigma$, such that 
\begin{center}
$D_w= \bigcup^{m}_{i=1} a_i(\Sigma) \cup \bigcup^{m}_{i=1} \bigcup^{k(i)}_{j=1} [a^{e_{i_{j}}}_{i_j},a^{e_{i_{j+1}}}_{i_{j+1}}](\Sigma)$,
\end{center}  
where $a_{i}(\Sigma)$, $i=1,2,...,n$, are points of $\Sigma$ and $[a^{e_{i_{j}}}_{i_j},a^{e_{i_{j+1}}}_{i_{j+1}}](\Sigma)$  is an oriented simple arc on $\Sigma$, for $i=1,2,...,m$, $j=1,2,...,k(i)$, that satisfy the conditions given in the Definition \ref{defwh}. So, $D_w=D_{w_1}\cup \cdots \cup D_{w_m} $, where 
\begin{center}
$D_{w_i}=[a^{e_{i_{1}}}_{i_1},a^{e_{i_{2}}}_{i_2}](\Sigma)+...+[a^{e_{i_{k(i)}}}_{i_{k(i)}},a^{e_{i_{1}}}_{i_1}](\Sigma)$.
\end{center}

Therefore, if we take a non endpoint $z_i$ in the path $[a^{e_{i_{k(i)}}}_{i_{k(i)}},a^{e_{i_{1}}}_{i_1}](\Sigma)$, then when we travel $D_{w_i}$ from $z_i$ we obtain the signed word $w_i$ and so $(\Sigma,D_w)$ is a realization of $w$.   

The proof of the following proposition is not the main goal of this paper so we will omit it.

\begin{proposition}
\label{propo} For every realization $(\Sigma,\gamma_w)$ of a signed Gauss paragraph $w$ there exist a virtual realization $K_w$ of $w$ such that $(\Sigma,\gamma_w)$ is a surface realization of $K_w$. Moreover, $(\Sigma,\gamma_w)$ is stably geotopic to $(\widetilde{\Sigma},\gamma_{\widetilde{w}})$ if and only if $K_w$ is isomorphic to $K_{\widetilde{w}}$ if and only if $w$ is isomorphic to $\widetilde{w}$.
\end{proposition}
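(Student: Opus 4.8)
The plan is to prove Proposition \ref{propo} in three stages, following closely the constructions already set up in this section.

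\textbf{Stage 1: every realization arises from a virtual realization.}
Given an arbitrary realization $(\Sigma,\gamma_w)$ of $w$, I would first recover the combinatorial data. The crossing points $C_{\gamma_w}$ are the $4$-valent vertices, and following each component of $\gamma_w$ from a chosen ordinary basepoint produces, by the ``Constructing signed Gauss paragraphs'' recipe, exactly the word $w$ (up to isomorphism, which is harmless by the isomorphism invariance built into the definitions). This assigns to $(\Sigma,\gamma_w)$ the abstract digraph $\Gamma_w$ and the flat virtual knot diagram $K_w=R(\Gamma_w)$ constructed above, where the map form $f(\stackrel{\rightarrow}{a}_i)$ records precisely how the four arcs $[c,a_i],[c,a_i^{-1}],[a_i,d],[a_i^{-1},b]$ cyclically meet at the vertex $a_i(\Sigma)$ inside a small disk of $\Sigma$ (Figure \ref{reali}). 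The point is then that $(\Sigma,\gamma_w)$ satisfies conditions (1)--(5) of Definition \ref{defwh} with respect to $\Gamma_w$, and the local cyclic data $\widetilde f(\stackrel{\rightarrow}{v}_i(\Sigma)) = f(\stackrel{\rightarrow}{v}_i)$ is exactly what makes it a \emph{surface realization} of $K_w$ in the sense defined after Definition \ref{defwh}. So this stage is essentially unwinding definitions, plus the observation that the valency-$4$ local picture of Figure \ref{cross}-$a$ is the same combinatorial crossing datum as a flat virtual crossing.

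\textbf{Stage 2: the two ``if and only if'' equivalences.}
I would prove the chain ``$(\Sigma,\gamma_w)$ stably geotopic to $(\widetilde\Sigma,\gamma_{\widetilde w})$'' $\Leftrightarrow$ ``$K_w\cong K_{\widetilde w}$'' $\Leftrightarrow$ ``$w\cong\widetilde w$'' by composing the previous lemma of this section (stable geotopy $\Leftrightarrow$ isomorphism of signed Gauss paragraphs) with a purely combinatorial dictionary between isomorphism classes of signed Gauss paragraphs and isomorphism classes of the associated flat virtual knot diagrams. For the latter: a signed Gauss paragraph isomorphism (cyclic permutations within components, alphabet changes, reordering components) translates directly to an isomorphism of the digraph $\Gamma_w$, and under the canonical map-form assignment of Figure \ref{reali} the induced virtual diagrams differ only by detour moves and a relabeling, hence are isomorphic as virtual diagrams (Definition of isomorphic virtual diagrams). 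Conversely, from a virtual-diagram isomorphism $K_w\cong K_{\widetilde w}$ I would read off the cyclic word along each component, which is preserved up to cyclic rotation and alphabet choice, giving an isomorphism $w\cong\widetilde w$. Thus the middle term is genuinely equivalent to the outer two, and the ``moreover'' in Proposition \ref{propo} follows.

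\textbf{Main obstacle.}
The delicate point is Stage 1's claim that \emph{any} realization $(\Sigma,\gamma_w)$ -- not just one built by the Theorem \ref{emb} procedure -- is a surface realization of the \emph{particular} $K_w$ with the map form of Figure \ref{reali}. A priori the four arcs at a crossing could be cyclically arranged in several ways around $a_i(\Sigma)$; I would need to argue that the convention used to read off $w$ (adding $a_i$ vs.\ $a_i^{-1}$ according to which strand one travels, together with the orientations of $\Sigma$ and of $\gamma_w$) pins the cyclic order down to exactly the spider-digraph in Figure \ref{reali}, so there is no ambiguity. This amounts to checking that the orientation data on $\Sigma$ plus the $\pm$ labelling of the crossing fully determines the local map form -- a finite case check at a single $4$-valent vertex, after which everything else is bookkeeping with detour moves. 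I would also remark that the genericity reduction already noted in this section (assuming $\rho_\beta(a_i,a_j)>2$, passing to $\gamma^{(2)}$ if needed) lets me assume crossings are disjoint and each sits in its own disk, which is what makes the local analysis clean.
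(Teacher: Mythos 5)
The paper does not actually prove this proposition --- it states explicitly that ``the proof \ldots is not the main goal of this paper so we will omit it'' --- so there is no authorial argument to compare yours against. Judged on its own, your outline is sound and is almost certainly the intended argument: Stage~1 is definition-unwinding plus the one genuinely nontrivial observation, which you correctly isolate, namely that the orientation of $\Sigma$ together with the $\pm$ convention of Figure~\ref{cross} pins down the cyclic order of the four arc-ends at each $4$-valent vertex, so the map form is forced to be that of Figure~\ref{reali} and any realization of $w$ is a surface realization of the canonical $K_w$; Stage~2 correctly factors the double equivalence through the earlier lemma (stable geotopy $\Leftrightarrow$ $w\cong\widetilde w$), reducing everything to the dictionary between paragraph isomorphisms and virtual-diagram isomorphisms. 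Two small points you should make explicit if you write this up: (i) the claim that any two virtual realizations of the same digraph with the same map form are related by detour moves is exactly Kauffman's detour-move equivalence and should be cited rather than waved at as ``bookkeeping''; (ii) the paper's definition of isomorphic virtual diagrams is \emph{only} a finite sequence of detour moves, which fixes the vertex set and map form, so a change of alphabet or a reordering of components in $w$ does not literally produce an isomorphic $K_w$ under that definition --- you silently add ``and a relabeling,'' which is the right fix but amounts to amending the paper's definition, and that should be said openly.
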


Let $w$ be a signed Gauss paragraph and let $(\Sigma,\gamma_w)\in \mathcal{D}$ be a realization of $w$. Consider the set $\mathcal{F}^{(n)}_{\Sigma}$ of all triangles of the $n^{th}$-barycentric subdivision of $\Sigma$ which intersect the curve $\gamma^{(n)}_w$. Then $\Omega^{(n)}_{\Sigma} = \bigcup_{\sigma \in \mathcal{F}^{(n)}_{\Sigma}} \sigma$ is an orientable and connected surface. The Figure \ref{reali1} describes the construction of $\Omega^{(2)}_{\Sigma}$ at a neighborhood of the crossing point $a_t$. 

\begin{theorem}
\label{min} With the notation above. For every $n\geq 2$, $(\Omega^{(n)}_{\Sigma},\gamma^{(n)}_w)\in \mathcal{D}$ is a realization of $w$ stably geotopic to $(\Sigma,\gamma_w)$. Moreover, 
\begin{enumerate}
\item $lim_{k\rightarrow \infty} \Omega^{(k)}_{\Sigma}= \gamma_w$, and
\item if $\delta^{(k)}_{\Sigma}$ is a component of the boundary, $\partial_{\Omega^{(k)}_{\Sigma}}$, of $\Omega^{(k)}_{\Sigma}$, then $lim_{k\rightarrow \infty} \delta^{(k)}_{\Sigma}$ is a closed walk in $\left|\gamma_w\right|$, where $\left|\gamma_w\right|$ is the graph $\gamma_w$ without orientation, and this corresponds to an only Carter's circle. Therefore, the number of components of the boundary of any $\Omega^{(k)}_{\Sigma}$ is equal to $b(w)$.
\end{enumerate}
\end{theorem}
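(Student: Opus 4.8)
The plan is to treat $\Omega^{(n)}_{\Sigma}$ as a regular neighbourhood of the curve $\gamma_w$ inside the subdivided surface $\Sigma^{(n)}$, to handle the ``realization / stable geotopy'' part by standard $PL$ arguments, and then to read the boundary of $\Omega^{(n)}_{\Sigma}$ off from the local form of $\gamma_w$ near ordinary and crossing points, matching it against Definition \ref{car}.

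First I would settle that $(\Omega^{(n)}_{\Sigma},\gamma^{(n)}_w)\in\mathcal{D}$ is a realization of $w$. Since $\gamma^{(n)}_w$ lies in the $1$-skeleton of $\Sigma^{(n)}$, a triangle of $\Sigma^{(n)}$ meets $\gamma^{(n)}_w$ exactly when one of its vertices lies on $\gamma^{(n)}_w$, so $\Omega^{(n)}_{\Sigma}=st(\gamma^{(n)}_w,\Sigma^{(n)})$ is the closed simplicial neighbourhood of $\gamma^{(n)}_w$. As $\Sigma^{(n)}$ is the second barycentric subdivision of $\Sigma^{(n-2)}$ and $\gamma^{(n-2)}_w$ is a subcomplex of $\Sigma^{(n-2)}$, the standard theory of regular neighbourhoods gives that for $n\geq 2$ the space $\Omega^{(n)}_{\Sigma}$ is a $PL$-surface with holes that collapses onto $\gamma_w$; it is orientable as a subsurface of the orientable $\Sigma^{(n)}$, and connected because $\gamma_w$ is connected (the word components of $w$ pairwise share a letter). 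The closed star in $\Omega^{(n)}_{\Sigma}$ of any vertex of $\gamma^{(n)}_w$ equals its closed star in $\Sigma^{(n)}$, so each crossing still has the local picture of Figure \ref{cross}-$a$ and $\gamma^{(n)}_w$ is an oriented $PL$-normal curve on $\Omega^{(n)}_{\Sigma}$; since subdivision changes neither the crossings nor their signs, traversing $\gamma^{(n)}_w$ again yields $w$. For stable geotopy, passing to a barycentric subdivision is a geotopy (take $Z=\Sigma$ and identity simplicity maps), and the inclusion $\Omega^{(n)}_{\Sigma}\hookrightarrow\Sigma^{(n)}$ is an orientation-preserving injective simplicity map carrying $\gamma^{(n)}_w$ to $\gamma^{(n)}_w$; chaining these gives $(\Omega^{(n)}_{\Sigma},\gamma^{(n)}_w)\approx(\Sigma^{(n)},\gamma^{(n)}_w)\approx(\Sigma,\gamma_w)$, hence the two are stably geotopic.

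Item $1$ is a point-set argument: $\Omega^{(k+1)}_{\Sigma}\subseteq\Omega^{(k)}_{\Sigma}$ because each triangle of $\Sigma^{(k+1)}$ meeting $\gamma_w$ sits inside a triangle of $\Sigma^{(k)}$ meeting $\gamma_w$, while $\gamma_w\subseteq\Omega^{(k)}_{\Sigma}$ for every $k$; since the diameters of the triangles of $\Sigma^{(k)}$ tend to $0$, any point not on $\gamma_w$ eventually lies only in triangles missing $\gamma_w$ and so leaves the nested intersection. Therefore $\bigcap_k\Omega^{(k)}_{\Sigma}=\gamma_w$, which is the meaning of $\lim_{k\to\infty}\Omega^{(k)}_{\Sigma}=\gamma_w$, and the collapse of Item $1$ (or the decomposition below) exhibits the deformation retraction $\Omega^{(k)}_{\Sigma}\to\gamma_w$ used afterwards.

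The heart of the proof is Item $2$. I would describe $\partial_{\Omega^{(n)}_{\Sigma}}$ concretely: along each edge of $\gamma^{(n)}_w$ the neighbourhood contributes two parallel boundary arcs, one on each side, and near each crossing $a_i$ it contributes four short ``corner'' arcs, one in each of the four regions cut by the strands of Figure \ref{cross}-$a$, each corner arc joining a side-arc of one incident edge to a side-arc of the next edge around $a_i$, the adjacency prescribed by the map form $f(\stackrel{\rightarrow}{a}_i)$ of Figure \ref{reali}. The circles of $\partial_{\Omega^{(n)}_{\Sigma}}$ are then the cyclic sequences obtained by starting on a side of an edge and iterating this ``turn the corner'' rule. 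I would then check that this rule, together with the bookkeeping of which side of an oriented edge one travels, reproduces verbatim conditions $(1)$ and $(2)$ of Definition \ref{car}: arriving at $a_i$ along one strand (a letter $a_i$ with exponent $e$) on a given side forces the continuation to be the edge incident to the other strand $a_i^{-e}$, with the sign dictated by $\mathrm{sign}(e)$. This yields a bijection between the components of $\partial_{\Omega^{(n)}_{\Sigma}}$ and the members of the maximal family $C(w)$ of Carter's circles, so $\#\partial_{\Omega^{(n)}_{\Sigma}}=b(w)$ for every $n\geq 2$; and under the retraction of Item $1$ the two side-arcs along an edge collapse to that edge and the corner arcs at $a_i$ collapse to $a_i$, so each $\delta^{(k)}_{\Sigma}$ converges to the closed edge-walk in $\left|\gamma_w\right|$ underlying its Carter's circle. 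The step I expect to be the main obstacle is exactly this last combinatorial identification: making the $\pm$ signs of the ``turn the corner'' rule agree with the $-\mathrm{sign}(e)$ and $\mathrm{sign}(e)$ of Definition \ref{car} requires fixing once and for all a convention for the two sides of an oriented edge that is compatible with the chosen orientation of $\Sigma$, and then verifying every case of the map form in Figure \ref{reali} against it.
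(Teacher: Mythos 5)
Your proposal is correct and follows essentially the same route as the paper: the paper likewise proves item $(2)$ by orienting the boundary circles of $\Omega^{(k)}_{\Sigma}$, observing that each one ``turns left'' at every closed star $st(a_i,\Sigma^{(k)})$, reading off the resulting word in the alphabet $E(w)$, and checking case by case that this turning rule reproduces conditions $(1)$ and $(2)$ of Definition \ref{car}, with maximality following because the two sides of every edge of $\gamma_w$ are each covered by exactly one boundary walk. The only difference is one of completeness rather than method: you supply the regular-neighbourhood and nested-intersection arguments for the realization claim and for item $(1)$, which the paper takes as given, while both you and the paper leave the four-case sign verification at the crossings largely to the reader.
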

 \begin{proof}
We will prove $(2)$. From (1) and the fact that for every $k$ $\delta^{(k)}_{\Sigma}$ is a closed $PL$-circle, then $\delta_{\Sigma}=lim_{k\rightarrow \infty} \delta^{(k)}_{\Sigma} $ is a closed walk in $\left|\gamma_w\right|$. 

We orient each component of the boundary of $\Omega^{(k)}_{\Sigma}$ in such way that $lk(a_{t},\Sigma^{(k)})$ has the clockwise orientation. Figure \ref{reali1} also describes the orientation of $lk(a_t,\Sigma^{(2)})$.
      
\begin{figure}[ht]
\begin{center}
\includegraphics[scale=0.12]{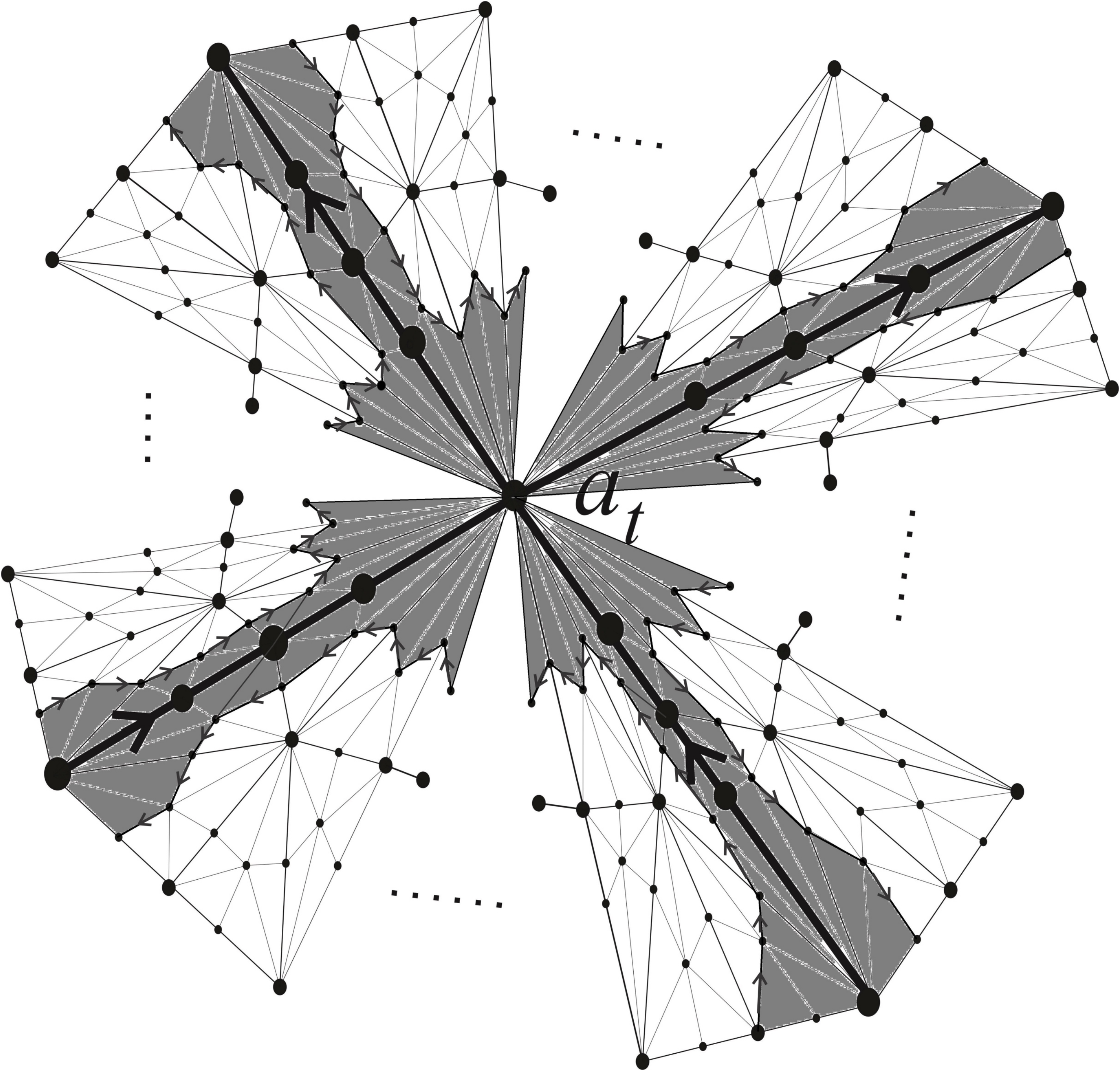}
\caption{Orientation of the $lk(a_i,\Sigma^{(2)})$}
\label{reali1}
\end{center}
\end{figure} 

Let $+\delta_{\Sigma}$ be the closed walk $\delta_{\Sigma}$ with the orientation inherit from the set $\{\delta^{(k)}_{\Sigma}\}_{k=2,3,...,\infty}$.  

We can infer, from Figure \ref{reali1}, that any component $\delta^{(k)}_{\Sigma}$ of $\partial_{\Omega^{(k)}_{\Sigma}}$ always turns ``left'' at the closed star, $st(a_i,\Sigma^{(k)})$, that it meets on the way. On the other hand, we know that each component, $\gamma_{w_i}$, of $\gamma_w$ has the form: 
\begin{center}
$\gamma_{w_{i}}=[a^{e_{i_{1}}}_{i_1},a^{e_{i_{2}}}_{i_2}](\Sigma)+[a^{e_{i_{2}}}_{i_2},a^{e_{i_{3}}}_{i_3}](\Sigma)+\cdots +[a^{e_{i_{k(i)}}}_{i_{k(i)}},a^{e_{i_{1}}}_{i_1}](\Sigma)$,
\end{center}
$i=1,2,...,n$, so $+ \delta_{\Sigma}$ can be described as a word, $E(+\delta_{\Sigma})$, in the alphabet $E(w)$ with the label of the paths of $\gamma_w$ that it meets on the way following the rule that in each crossing point $+ \delta_{\Sigma}$ always turns to ``left''. For example, if $-[a^{-1}_t,a^{e_{i_{j}}}_{i_{j}}](\Sigma)$ occurs in $E(+\delta_{\Sigma})$, then when $\delta_{\Sigma}$ turns to ``left'' at the crossing point $a_t$, it travels the path $[a^{e_{i_{k}}}_{i_{k}},a_t](\Sigma)$ in the contrary orientation of $\gamma_w$, so $-[a^{e_{i_{k}}}_{i_{k}},a_t](\Sigma)$ has to occur in $E(+\delta_{\Sigma})$, if we continue this argument, and considering the remaining three cases, we prove that $E(+\delta_{\Sigma})$ correspond to a Carter's circle. 

For every $k\neq l$, $k,l \geq 2$, $\partial _{\Omega^{(k)}_{\Sigma}}$ and $\partial _{\Omega^{(l)}_{\Sigma}}$ have the same cardinal, so let $\delta^{(k,1)}_{\Sigma}$,...,$\delta^{(k,m)}_{\Sigma}$ be the components of the boundary of $\Omega^{(k)}_{\Sigma}$ and let $\delta^{i}_{\Sigma}=lim_{k\rightarrow \infty }\delta^{(k,i)}_{\Sigma}$. Since, every edge of $\gamma_w$ is traveled in both directions by one or at most two different closed walks of $\{+\delta^{1}_{\Sigma},...,+\delta^{m}_{\Sigma}\}$, then $\bigcup^{m}_{i=1}E(+\delta^{i}_{\Sigma})=E(w)$, hence $\{E(+\delta^{1}_{\Sigma}),...,E(+\delta^{m}_{\Sigma})\}$ is a maximal set of Carter's circles. 
\end{proof}

For every $k\neq l$, $k,l \geq 2$, $\partial _{\Omega^{(k)}_{\Sigma}}$ and $\partial _{\Omega^{(l)}_{\Sigma}}$ have the same cardinality, then we can choose any $k\geq 2$ and define $S_{\Omega^{(n)}_{\Sigma}}$ as the closed surface obtained by gluing disc to the boundary of any one of the surfaces $\Omega^{(n)}_{\Sigma}$, then $(S_{\Omega^{(n)}_{\Sigma}},\gamma_w)\in \mathcal{D}$ is a realization of $w$ isomorphic to $(\Sigma,\gamma_w)$ and the genus of $S_{\Omega^{(n)}_{\Sigma}}$ is at most the genus of $S_{\Sigma}$. 

\begin{corollary}
The genus of $S_{\Omega^{(n)}_{\Sigma}}$ is $\frac{n+2-b(w)}{2}$. Therefore, $w$ is geometric if and only if $b(w)=n+2$.  
\end{corollary}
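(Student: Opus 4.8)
Throughout, $n$ denotes the number of crossing points of $w$ (so $|V(\Gamma_w)|=n$), and we read $S_{\Omega^{(n)}_{\Sigma}}$ as $S_{\Omega^{(k)}_{\Sigma}}$ for any fixed barycentric index $k\ge 2$, which is legitimate since $\left|\partial_{\Omega^{(k)}_{\Sigma}}\right|$, and hence the capped-off surface, does not depend on $k$. The proof is then an Euler-characteristic count resting on Theorem \ref{min}. First I would note that the underlying unoriented $4$-valent graph $\left|\gamma_w\right|$ has $n$ vertices and, since each of the $2n$ letters $a_1^{\pm1},\dots,a_n^{\pm1}$ contributes exactly one edge, has $2n$ edges, so $\chi(\left|\gamma_w\right|)=n-2n=-n$. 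By the construction of $\Omega^{(k)}_{\Sigma}$ together with part (1) of Theorem \ref{min}, for $k\ge 2$ the surface $\Omega^{(k)}_{\Sigma}$ is a connected compact orientable surface-with-boundary that is a regular neighborhood of, and so deformation retracts onto, $\left|\gamma_w\right|$; hence $\chi\bigl(\Omega^{(k)}_{\Sigma}\bigr)=-n$.

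Next I would cap the holes. By part (2) of Theorem \ref{min}, $\partial_{\Omega^{(k)}_{\Sigma}}$ has exactly $b(w)$ components, so $S_{\Omega^{(n)}_{\Sigma}}$ is obtained from $\Omega^{(k)}_{\Sigma}$ by attaching $b(w)$ discs along disjoint boundary circles; this keeps the surface closed and orientable, and each attachment raises the Euler characteristic by $\chi(D^{2})-\chi(S^{1})=1$, so
\[
\chi\bigl(S_{\Omega^{(n)}_{\Sigma}}\bigr)=-n+b(w).
\]
Letting $g$ be the genus of this closed connected orientable surface and substituting $\chi=2-2g$ gives $2-2g=b(w)-n$, i.e. $g=\tfrac{n+2-b(w)}{2}$; in particular $n+2-b(w)$ is forced to be even. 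This proves the first assertion.

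For the equivalence, if $b(w)=n+2$ the formula gives $g=0$, so $S_{\Omega^{(n)}_{\Sigma}}\cong\mathbb{S}^{2}$ and $(S_{\Omega^{(n)}_{\Sigma}},\gamma_w)\in\mathcal{D}$ realizes $w$ on $\mathbb{S}^{2}$, whence $w$ is geometric. Conversely, if $w$ is geometric, fix a realization $(\mathbb{S}^{2},\gamma)\in\mathcal{D}$ with $w_\gamma=w$; the standing assumption that the $w_i$ are pairwise non-disjoint makes $\left|\gamma\right|$ connected, so each region of $\mathbb{S}^{2}\setminus\left|\gamma\right|$ is an open disc, and therefore for $k\ge 2$ each complementary region of the regular neighborhood $\Omega^{(k)}_{\mathbb{S}^{2}}$ is a disc. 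Capping these gives back $\mathbb{S}^{2}$, so $g=0$, and the genus formula forces $b(w)=n+2$; since $b(w)$ is an isomorphism invariant of $w$ (by the proposition above), the value is independent of which realization we used.

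The steps I expect to require the most care are the two topological inputs above: that $\Omega^{(k)}_{\Sigma}$ really is a regular neighborhood of $\left|\gamma_w\right|$ for $k\ge 2$ (this is the content of $\lim_{k\to\infty}\Omega^{(k)}_{\Sigma}=\gamma_w$ in Theorem \ref{min}(1), used together with the separation $\rho_{\gamma}(a_i,a_j)>2$ so that the stars of distinct crossings do not collide), and the classical fact that a connected graph embedded in $\mathbb{S}^{2}$ decomposes it into open discs. Given these, the remainder is routine Euler-characteristic bookkeeping.
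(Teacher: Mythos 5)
Your proof is correct and follows essentially the same route as the paper's: compute $\chi(\Omega^{(k)}_{\Sigma})=-n$ via the deformation retraction onto the $4$-valent graph $\gamma_w$ (with $n$ vertices and $2n$ edges), cap the $b(w)$ boundary circles supplied by Theorem \ref{min}, and read off the genus from $2-2g=b(w)-n$. The only difference is that you spell out the equivalence ``$w$ geometric $\Leftrightarrow b(w)=n+2$,'' which the paper leaves implicit as a consequence of $(S_{\Omega^{(k)}_{\Sigma}},\gamma_w)$ being a minimal realization.
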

\begin{proof}
It is well known that, $2-2g(S_{\Omega^{(n)}_{\Sigma}})=\chi(\Omega^{(n)}_{\Sigma})+\left|\partial_{\Sigma}\right|$, but $\left|\partial_{\Sigma}\right|=b(w)$, and so $2-2g(S_{\Omega^{(n)}_{\Sigma}})=\chi(\Omega^{(n)}_{\Sigma})+b(w)$. Besides, $\gamma_w$ is a retract of deformation of $\Omega^{(n)}_{\Sigma}$, therefore $\chi(\Omega^{(n)}_{\Sigma})=\chi(\gamma_w)=n-2n=-n$, hence $2-2g(S_{\Omega^{(n)}_{\Sigma}})=-n+b(w)$.
\end{proof}

A \textit{minimal realization} of a signed Gauss paragraph $w$ is an element $(\Sigma,\gamma)$ of $\mathcal{D}$, where $\Sigma$ has the minimum genus among all the realizations of $w$. 

A direct consequence of the previous corollary is that for every $(\Sigma,\gamma_w)$ realizations of $w$, $(S_{\Omega^{(k)}_{\Sigma}},\gamma_w)$ is the minimal realization of $w$ up to homomorphisms.

\subsection{Primitive curves}

Let $w$ be a signed Gauss word, $(\Sigma,\gamma_w)$ a realization of $w$ and $a_t$ a crossing point of $\gamma_w$. For every $k\geq 2$, we define the oriented $PL$-normal curve of two components $(\Sigma^{(k)},\gamma^{(k)}_{w^{a_t}})$ by the process shown in the Figure \ref{reali2}.
\begin{figure}[ht]
\begin{center}
\includegraphics[scale=0.23]{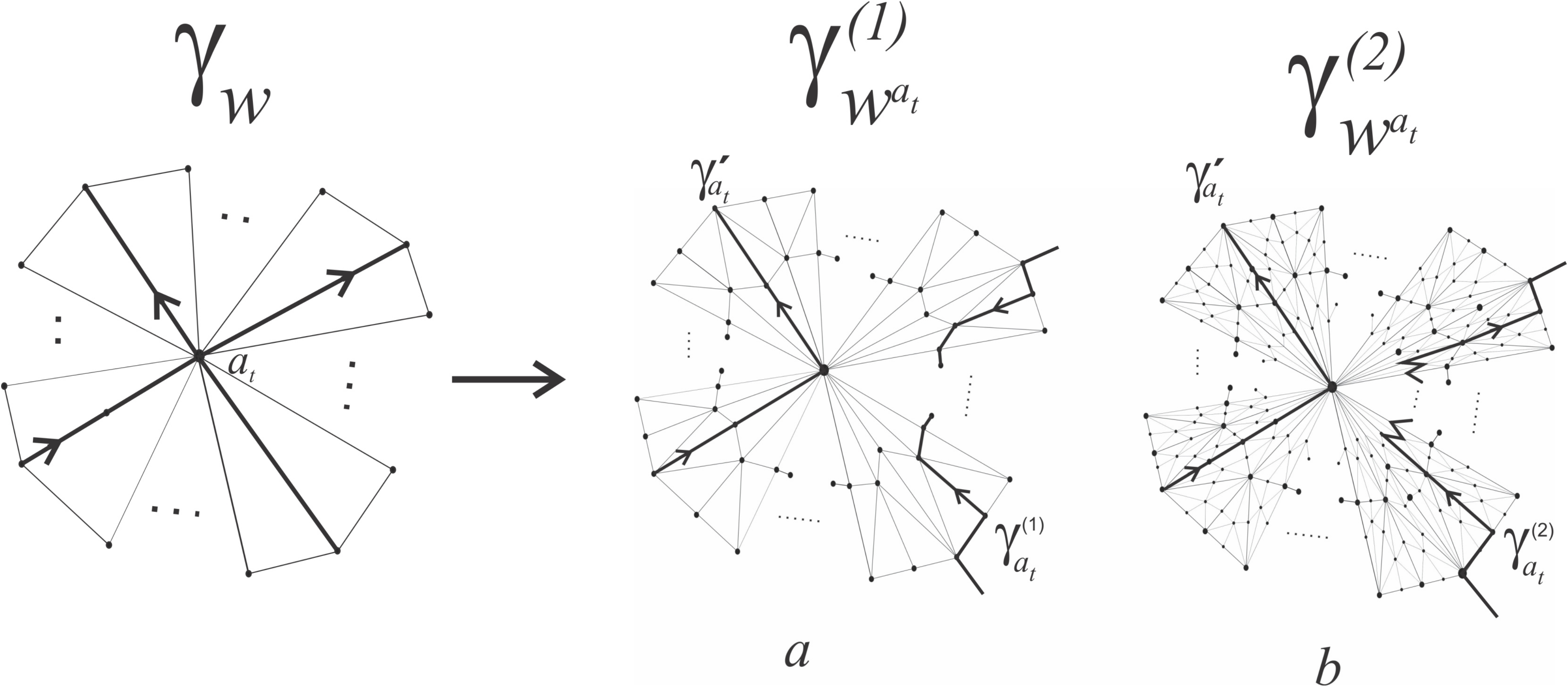}
\caption{Construction of $\gamma^{(1)}_{a_t}$ and $\gamma^{(2)}_{a_t}$}
\label{reali2}
\end{center}
\end{figure} 
 
The construction described in Figure \ref{reali2} is called \textit{splitting the crossing $a_t$}. We use $\gamma^{(k)}_{w^{a_t}}=\gamma'_{a_t} \wedge \gamma^{(k)}_{a_t}$

\begin{lemma}
\label{homolo1} For every $k,l$ the curves $\gamma^{(k)}_{a_t}$ and $\gamma^{(l)}_{a_t}$ are homologus and $lim_{k\rightarrow \infty}\gamma^{(k)}_{w^{a_t}}$ is homologue to $\gamma_w$.
\end{lemma}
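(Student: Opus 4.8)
\emph{Proof plan.} The argument is entirely local at $a_t$ and reduces both assertions to the vanishing of $H_1$ of a $2$-disc. The plan is to show that each of the $1$-chains $\gamma^{(k)}_{w^{a_t}}$, and likewise each arc $\gamma^{(k)}_{a_t}$, differs from $\gamma_w$ (respectively from $\gamma^{(l)}_{a_t}$) only inside a small disc around $a_t$, and that the piece sitting inside that disc has a boundary on the link of $a_t$ that is forced to agree with the corresponding piece of $\gamma_w$; hence the difference is a $1$-cycle carried by a disc, so it is null-homologous in $\Sigma$.

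First I would set up the disc. Since $a_t$ is an interior vertex of $\Sigma$ and, for $k<l$, $st(a_t,\Sigma^{(l)})\subset st(a_t,\Sigma^{(k)})$, put $D:=st(a_t,\Sigma^{(k)})$; then $D$ is a $PL$ $2$-disc with $a_t$ in its interior and $\partial D=lk(a_t,\Sigma^{(k)})$ is a $PL$-circle meeting $\gamma_w$ in exactly four points. By the construction of Figure \ref{reali2}, splitting the crossing alters $\gamma_w$ only inside $st(a_t,\Sigma^{(k)})$, so $\gamma^{(k)}_w$, $\gamma^{(k)}_{w^{a_t}}$ and $\gamma^{(l)}_{w^{a_t}}$ all restrict to the very same $1$-chain on $\Sigma\setminus\mathrm{int}\,D$; the only difference lives inside $D$.

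Next comes the boundary bookkeeping, which I expect to be the only delicate point. Inside $D$ the cycle $\gamma_w$ contributes the ``cross'' $C=\gamma_w\cap D$, a union of two oriented arcs through $a_t$ with endpoints among the four points of $\partial D\cap\gamma_w$. The splitting of Figure \ref{reali2} is the orientation-compatible smoothing — the unique planar reconnection of the four ends that matches incoming with outgoing directions — so it replaces $C$ by a pair of disjoint oriented arcs $S_k$ inside $D$ with $\partial S_k=\partial C$; this equality is checked directly on the four endpoints, using the cyclic (alternating) order of those four points on $\partial D$. Hence $C-S_k$ is a $1$-cycle supported in the disc $D$, therefore null-homologous in $\Sigma$, and consequently
\begin{center}
$\gamma^{(k)}_{w^{a_t}}=\gamma^{(k)}_w-C+S_k\ \sim\ \gamma^{(k)}_w$
\end{center}
in $H_1(\Sigma,\mathbb{Z})$, after the canonical identification $H_1(\Sigma^{(k)},\mathbb{Z})\cong H_1(\Sigma,\mathbb{Z})$ under which $\gamma^{(k)}_w$ and $\gamma_w$ carry the same class. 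This gives the second assertion for every $k\ge 2$, hence for $\lim_{k\to\infty}\gamma^{(k)}_{w^{a_t}}$, whose homology class is the constant value $[\gamma_w]$.

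For the first assertion I would rerun the identical argument with the two single arcs $\gamma^{(k)}_{a_t}\cap D$ and $\gamma^{(l)}_{a_t}\cap D$ in place of $C$ and $S_k$: both join the same pair among the four points of $\partial D\cap\gamma_w$ (the pair picked out by the orientation-compatible smoothing, which is the same pair at every subdivision level) with matching induced orientations there, so their difference is again a $1$-cycle carried by $D$ and hence null-homologous; since the two curves agree outside $D$, we conclude $\gamma^{(k)}_{a_t}\sim\gamma^{(l)}_{a_t}$. The main obstacle throughout is exactly the orientation accounting at the crossing, i.e.\ deducing $\partial S_k=\partial C$ from the fact that Figure \ref{reali2} depicts the orientation-respecting smoothing; once that is granted, the rest is the triviality of $H_1$ of a disc together with the invariance of homology under barycentric subdivision.
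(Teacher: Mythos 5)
Your argument is correct: the oriented smoothing at $a_t$ alters the $1$-chain only inside the disc $st(a_t,\Sigma^{(k)})$, and because the reconnection matches incoming with outgoing ends the boundary of the replaced piece is unchanged, so the difference is a $1$-cycle carried by a disc and hence null-homologous; the same localization, applied to the single arcs, gives $\gamma^{(k)}_{a_t}\sim\gamma^{(l)}_{a_t}$. Note that the paper states Lemma \ref{homolo1} without any proof, so there is nothing to compare against; your disc-localization argument is the natural one and supplies the missing justification.
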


The proof of the following theorem can be found in \cite{CaEl}.

\begin{theorem}
Let $w$ be a signed Gauss word and let $(S_w,\gamma_w)$ be a minimal realization of $w$. Let $\varphi_w$, $\varphi_{a_1}$,...,$\varphi_{a_n}$ be the homology classes represented by $\gamma_w$, $\gamma^{(k)}_{a_1}$,...,$\gamma^{(k)}_{a_n}$, respectively. Then $H_1(S_w,\mathbb{Z})$ is spanned by $\{\varphi_w,\varphi_{a_1},...,\varphi_{a_n}\}$.
\end{theorem}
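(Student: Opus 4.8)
Fix $n\ge 2$ and set $\Omega:=\Omega^{(n)}_{S_w}$, the regular neighbourhood of $\gamma_w$ in $S_w$ formed by the triangles of $S_w^{(n)}$ meeting $\gamma_w^{(n)}$. By the construction of $S_w$ we have $S_w=\Omega\cup(D_1\cup\cdots\cup D_{b(w)})$, where the $D_i$ are discs glued along the components of $\partial_{\Omega}$, and $\Omega$ deformation retracts onto $\gamma_w$ (this is exactly what is used in the proof of the genus corollary above). Hence $S_w$ is obtained from $\Omega$ by attaching $2$-cells only, so $H_1(S_w,\Omega)=0$ and the long exact sequence of the pair gives a surjection $H_1(\gamma_w;\mathbb Z)\cong H_1(\Omega;\mathbb Z)\twoheadrightarrow H_1(S_w;\mathbb Z)$. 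Each of $\gamma_w,\gamma^{(k)}_{a_1},\dots,\gamma^{(k)}_{a_n}$ is supported in a neighbourhood of $\gamma_w$, so after retracting $\Omega$ onto $\gamma_w$ the classes $\varphi_w,\varphi_{a_1},\dots,\varphi_{a_n}$ are the images under this surjection of well-defined classes in $H_1(\gamma_w;\mathbb Z)$, which by Lemma \ref{homolo1} do not depend on $k$. Therefore it suffices to prove the stronger assertion that $\{\varphi_w,\varphi_{a_1},\dots,\varphi_{a_n}\}$ already generates $H_1(\gamma_w;\mathbb Z)$.

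\textbf{Making the classes explicit.} Label the $2n$ edges of $\gamma_w$ as $e_1,\dots,e_{2n}$ in the cyclic order in which the oriented curve $\gamma_w$ traverses them, so that $e_j$ joins the crossing visited at step $j$ to the one visited at step $j+1$. As a $1$-cycle, $\gamma_w$ is then $\varphi_w=\sum_{j=1}^{2n}e_j$. If a crossing $a_t$ is visited at steps $p_t<q_t$, then splitting $a_t$ (Figure \ref{reali2}) cuts the Euler circuit into the two closed walks $e_{p_t}+\cdots+e_{q_t-1}$ and $e_{q_t}+\cdots+e_{2n}+e_1+\cdots+e_{p_t-1}$ based at $a_t$; passing to the limit $k\to\infty$ in Lemma \ref{homolo1}, where the connecting sub-arc of the smoothing shrinks to the point $a_t$, one of these two sums is $\varphi_{a_t}$ and the other is $\varphi_w-\varphi_{a_t}$. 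Since replacing $\varphi_{a_t}$ by $\varphi_w-\varphi_{a_t}$ does not affect the subgroup generated together with $\varphi_w$, I may assume $\varphi_{a_t}=e_{p_t}+\cdots+e_{q_t-1}$.

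\textbf{The combinatorial core.} It remains to show that in the cycle lattice $H_1(\gamma_w;\mathbb Z)\subset\bigoplus_{j=1}^{2n}\mathbb Z e_j$, which has rank $n+1$, the $n+1$ elements $\varphi_w,\varphi_{a_1},\dots,\varphi_{a_n}$ form a $\mathbb Z$-basis (hence, being already a generating set of $H_1(\gamma_w;\mathbb Z)$, map onto a generating set of the quotient $H_1(S_w;\mathbb Z)$). I would prove this by introducing the depth-first spanning tree $T$ of $\gamma_w$ associated with the Euler circuit: $T$ consists of the $n-1$ edges whose head is a crossing visited for the first time, other than the starting crossing. Following tree edges backwards strictly decreases the first-visit time, so $T$ is indeed a tree, and its $n+1$ complementary edges (one of which is the ``return'' edge $e_{2n}$) give the standard fundamental-cycle basis. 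Ordering the crossings by first occurrence, one checks that the change-of-basis matrix from this fundamental-cycle basis to $\{\varphi_w,\varphi_{a_1},\dots,\varphi_{a_n}\}$ is triangular with entries $\pm1$ on the diagonal: the fundamental cycle of $e_{2n}$ accounts for $\varphi_w$, and each $\varphi_{a_t}$ equals the fundamental cycle of the unique non-tree edge entering the second occurrence of $a_t$ corrected by cycles already recorded for earlier crossings. Thus the matrix is unimodular and the claim follows. (Equivalently, the claim can be proved by induction on $n$, removing the last crossing in first-occurrence order from the double-occurrence word and tracking how $\gamma_w$ and the arc cycles change.)

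\textbf{Where the difficulty lies.} The reduction in the first paragraph and the identification of the classes with explicit edge-sums are routine given Lemma \ref{homolo1} and the description of $S_w$; the real work is the third paragraph, namely arranging the spanning tree and the ordering so that the transition matrix is genuinely triangular with unit diagonal, while keeping consistent track of which of the two Euler-circuit arcs the sign conventions of Figure \ref{reali2} attach to $\varphi_{a_t}$. The earlier observation that this choice is immaterial for the span removes the sign ambiguity from the \emph{statement}, but the bookkeeping in the triangularity (or induction) argument still has to be carried out carefully.
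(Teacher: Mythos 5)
The paper gives no proof of this theorem at all — it is quoted from \cite{CaEl} — so your argument is necessarily an independent route, and its skeleton is sound: attaching the discs to $\Omega^{(n)}_{S_w}$ gives a surjection $H_1(\gamma_w;\mathbb{Z})\cong H_1(\Omega^{(n)}_{S_w};\mathbb{Z})\rightarrow H_1(S_w;\mathbb{Z})$; the split curves lift inside this neighbourhood to the arc cycles $e_{p_t}+\cdots+e_{q_t-1}$ (and you rightly observe that the ambiguity $\varphi_{a_t}\leftrightarrow\varphi_w-\varphi_{a_t}$ is harmless for the span); the first-entry tree $T$ is indeed a spanning tree; and the cycle lattice is identified with $\mathbb{Z}^{n+1}$ by recording coefficients on the $n+1$ non-tree edges $e_{2n}$ and $e_{q_t-1}$, $t=1,\dots,n$. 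In these coordinates the coefficient of $\varphi_{a_t}$ on $e_{q_s-1}$ is $1$ exactly when $p_t<q_s\le q_t$ and $0$ otherwise, its coefficient on $e_{2n}$ is $0$, and the row of $\varphi_w$ is all $1$'s; everything thus reduces, as you say, to the unimodularity of this matrix.

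The one step that fails as written is the triangularity claim under \emph{first}-occurrence ordering. Take the occurrence pattern $abbcac$ (signs are irrelevant for this step), so $p_a=1,q_a=5$, $p_b=2,q_b=3$, $p_c=4,q_c=6$: the non-tree edges attached to $a,b,c$ are $e_4,e_2,e_5$, and in first-occurrence order $a,b,c$ the coefficient matrix of $\varphi_a,\varphi_b,\varphi_c$ on these edges is
\[
\begin{pmatrix}1&1&0\\0&1&0\\1&0&1\end{pmatrix},
\]
which is triangular in neither direction: $\varphi_a$ uses $b$'s non-tree edge although $b$ first occurs later, while $\varphi_c$ uses $a$'s. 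The repair is a one-line change: order the crossings by \emph{second} occurrence. Since the $(t,s)$ entry is $1$ exactly when $p_t<q_s\le q_t$, it vanishes whenever $q_s>q_t$ and equals $1$ for $s=t$, so the matrix is lower triangular with unit diagonal; and since only $\varphi_w$ has a nonzero coefficient on $e_{2n}$, expanding along that column shows the full $(n+1)\times(n+1)$ matrix has determinant $\pm1$. With that correction your argument is complete, and it yields a self-contained combinatorial proof (a basis of $H_1(\gamma_w;\mathbb{Z})$, hence a generating set of the quotient $H_1(S_w;\mathbb{Z})$) where the paper merely defers to the homological machinery of \cite{CaEl}.
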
 

Let $w$ be a signed Gauss word with realization given by $(\Sigma,\gamma_w)$. Without loss of generality we may assume that $w=a_{t}\widetilde{w}_{t}a^{-1}_{t}w_{t}$, where $\widetilde{w}_{t}$ and $w_{t}$ are sub-sequences of $w$. Then $\{\widetilde{w}_{t},w_{t}\}$ is a signed Gauss paragraph of any $\gamma^{(k)}_{w^{a_t}}$, $k\geq 2$.

Now, let $w=\{w_{1},w_{2}\}$ be a signed Gauss paragraph and let $(S_{w},\gamma_{w}=\lambda \cup \sigma)$ its minimal realization. Take a common crossing $a_{1}$, of $\lambda$ and $\sigma$ and consider ordinary vertices $a'_{n+1}\in \lambda$, $a''_{n+1}\in \sigma$ in $st(a_1,\Sigma^{(2)})$. We  can write $w$ as $\{a_{1}\widetilde{w}_{1},a^{-1}_{1}\widetilde{w}_{2}\}$ and so, we define  the  \textit{join} of $\lambda$ and $\sigma$, with respect to $D_{a_1}=st(a_1,\Sigma^{(2)})$,  as the oriented $PL$-normal curve $(S_{w},\widehat{\gamma}_{w}=\lambda \vee \sigma)$, which is a realization of $\widehat{w}=a_{1}\widetilde{w}_{1}a_{n+1}a^{-1}_{1}\widetilde{w}_{2}a^{-1}_{n+1}$, obtained by the identification, $a'_{n+1}=a''_{n+1}$, of $\lambda$ and $\sigma$ described in the Figure \ref{suma}. 
\begin{figure}[ht]
\begin{center}
\includegraphics[scale=0.45]{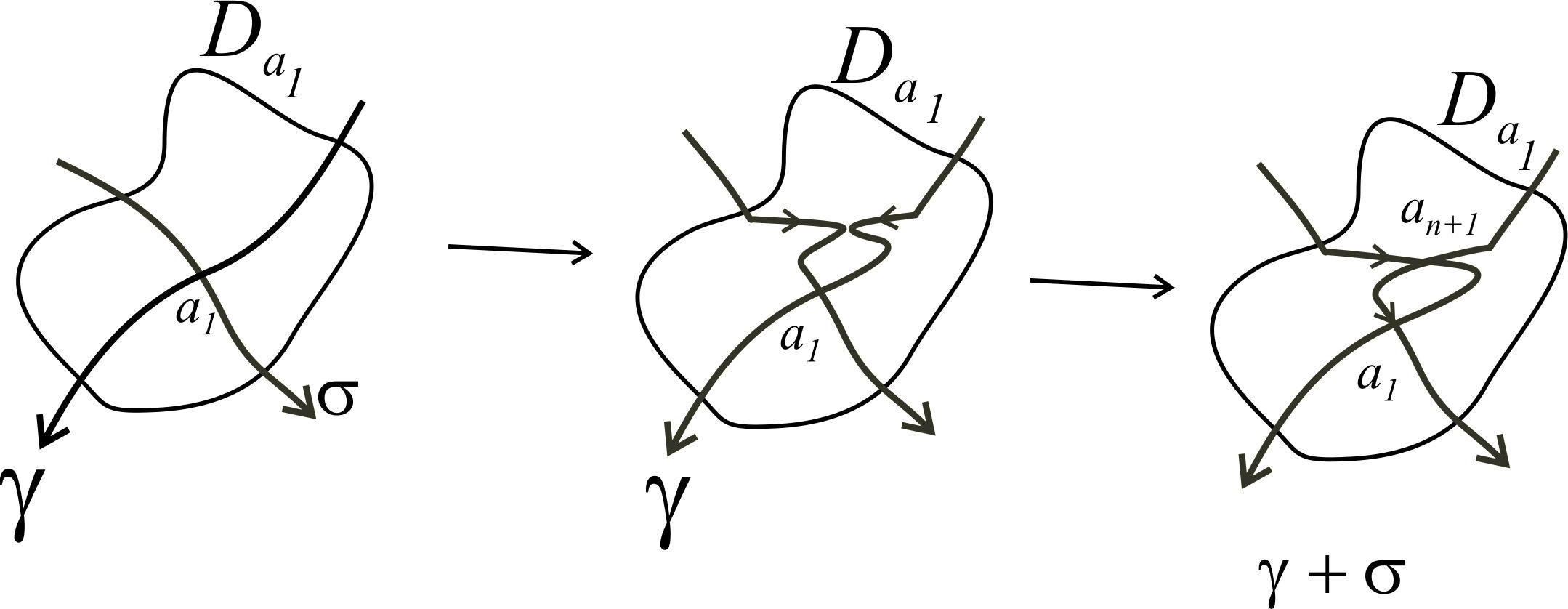}
\caption{A Join of a oriented normal curve of two components}
\label{suma}
\end{center}
\end{figure}

\begin{theorem}
With the notation above. If $w$ is a signed Gauss paragraph, then $(S_{w},\gamma_{w})$ is a minimal realization of $w$ if and only if so is $(S_{w},\widehat{\gamma}_{w})$ for $\widehat{w}$. 
\end{theorem}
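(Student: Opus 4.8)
The plan is to analyze what the join operation does to the combinatorial data that governs minimal genus, namely the number of Carter's circles, and then invoke the Corollary computing the genus of $S_{\Omega^{(n)}_{\Sigma}}$. Recall that for a signed Gauss paragraph $u$ with $n$ letters and $c$ components, the minimal genus surface $S_u$ has genus $g = \tfrac{n + 2c - 2 - b(u)}{2}$ (the Corollary handled the one–component case $c=1$; the general bookkeeping is the same, since $\chi(\gamma_u) = n - 2n = -n$ for the underlying graph and $|\partial| = b(u)$, giving $2 - 2g = -n + b(u)$ after capping, while the count of boundary circles that must be capped to reach a \emph{connected} closed surface depends on $c$ in the evident way). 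The point is that $(S_w,\gamma_w)$ is a minimal realization if and only if its genus realizes this value, i.e. if and only if the surface produced by the $\Omega^{(n)}$–construction and disc–gluing is exactly $S_w$; equivalently, capping $\Omega^{(n)}$ introduces no ``wasted'' handles. So the statement reduces to showing $w$ and $\widehat w$ have the same minimal genus and that $(S_w,\gamma_w)$ achieves it precisely when $(S_w,\widehat\gamma_w)$ does.

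First I would compute $b(\widehat w)$ in terms of $b(w)$. The join replaces the two words $\{a_1\widetilde w_1,\, a_1^{-1}\widetilde w_2\}$ of $w$ by the single word $a_1\widetilde w_1 a_{n+1} a_1^{-1}\widetilde w_2 a_{n+1}^{-1}$ of $\widehat w$: one component is lost, one new crossing $a_{n+1}$ is added, so $\widehat w$ has $n+1$ letters and $c-1$ components if $w$ had $c$ components. Using Definition \ref{car}, the edge set $E(\widehat w)$ differs from $E(w)$ only near $a_1$ and at the new vertex $a_{n+1}$; tracing the ``turn left'' rule that Theorem \ref{min} shows computes Carter's circles from the boundary walks of $\Omega^{(k)}$, the insertion of $a_{n+1}$ (which sits on an ordinary arc of both $\lambda$ and $\sigma$ before identification, cf. Figure \ref{suma}) merges or splits the Carter circles in a controlled way: one checks, case by case on the four local edge–types at $a_1$, that $b(\widehat w) = b(w)$. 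Plugging into the genus formula: $\widehat w$ contributes $n+1$ letters and $c-1$ components, so $2 - 2g(\widehat w) \;=\; -(n+1) + b(\widehat w) + \text{(capping correction for } c-1\text{ components)}$, and the two corrections ($-n + b(w)$ versus $-(n{+}1) + b(w)$ together with one fewer component requiring one fewer extra cap) cancel, giving $g(S_w) = g(S_{\widehat w})$.

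The geometric input that makes the ``if and only if'' work, rather than just equality of the \emph{minimal} genera, is Lemma \ref{homolo1} together with the fact that the join is performed inside the single disc $D_{a_1} = st(a_1,\Sigma^{(2)})$: the ambient surface $S_w$ is literally unchanged by the join (the identification $a'_{n+1} = a''_{n+1}$ takes place in the interior of $\Sigma^{(2)}$ and does not alter the surface, only the curve drawn on it). Hence $\gamma_w$ and $\widehat\gamma_w$ live on the \emph{same} surface $S_w$; so if $S_w$ has minimal genus for $w$ it automatically has minimal genus for $\widehat w$, because we have just shown the two minimal genera are equal, and conversely. Concretely: $(S_w,\widehat\gamma_w)$ is a realization of $\widehat w$ (this is the construction), so $g(S_w) \ge g(S_{\widehat w})$ always; if moreover $(S_w,\gamma_w)$ is minimal for $w$ then $g(S_w) = g(S_{\widehat w})$ forces $(S_w,\widehat\gamma_w)$ minimal for $\widehat w$; the reverse implication is symmetric using that $\gamma_w$ is recovered from $\widehat\gamma_w$ by the inverse local move (splitting the crossing $a_{n+1}$ and discarding it, in the sense of Figure \ref{reali2}), which again does not change $S_w$.

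I expect the main obstacle to be the case analysis in the second step: verifying $b(\widehat w) = b(w)$ directly from Definition \ref{car} requires carefully matching the four sign/orientation configurations of the edges incident to $a_1$ against the pairing conditions (1)--(2) defining a Carter circle, and confirming that inserting the $a_{n+1}$–pair reconnects the ``turn left'' boundary walks so that exactly the same number of circles results. A cleaner route, which I would pursue in parallel, is to avoid the combinatorics entirely: since the join only modifies $\gamma_w$ inside the disc $D_{a_1}$ and leaves $S_w$ fixed, one argues \emph{topologically} that $\Omega^{(n)}_{S_w}(\widehat\gamma_w)$ and $\Omega^{(n)}_{S_w}(\gamma_w)$ differ by an operation on a disc neighbourhood that preserves the number of boundary components (the new arc at $a_{n+1}$ together with the removed/added bands is a standard ``band move within a disc''), whence $b(\widehat w) = b(w)$ follows from Theorem \ref{min}(2) without ever touching Definition \ref{car}. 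Either way the conclusion is immediate once the boundary count is pinned down.
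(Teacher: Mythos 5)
Your overall strategy --- track how the join changes the number of Carter's circles and feed that into the genus formula --- is exactly the paper's (it reads off the change in $b$ from the local picture of $\partial_{\Omega^{(2)}}$ near $a_1$ and $a_{n+1}$, Figure \ref{grasu}, and substitutes into the Corollary). But your central quantitative claim, $b(\widehat w)=b(w)$, cannot be right, for a parity reason: $\Omega^{(k)}$ deformation retracts onto the $4$-valent graph underlying the curve, so $\chi(\Omega^{(k)})=-n$ for $w$ and $-(n+1)$ for $\widehat w$, and since $b=2-2g-\chi$ forces $b\equiv\chi\pmod 2$, the numbers $b(w)$ and $b(\widehat w)$ have opposite parities. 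The relation you actually need is $b(\widehat w)=b(w)+1$: only then does $2-2g(S_{\widehat w})=-(n+1)+b(\widehat w)$ equal $2-2g(S_w)=-n+b(w)$. (The paper asserts $b(\widehat w)=b(w)-1$ and writes the genus formula with $+b$ instead of $-b$; taken literally neither sign pattern closes the computation, so there are typos there, but the identity that makes the theorem true is $b(\widehat w)=b(w)+1$.) Your auxiliary genus formula $g=\frac{n+2c-2-b}{2}$ with a component-dependent correction is also wrong and inconsistent with the paper's Corollary: at $c=1$ it gives $\frac{n-b}{2}$ rather than $\frac{n+2-b}{2}$, and in fact no correction is needed, because $\chi$ of a connected $4$-valent graph on $n$ vertices is $-n$ no matter how many closed curves traverse it. As written, the ``cancellation'' in your second paragraph stacks these two errors and is never actually verified.

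The redeeming feature of your proposal is the topological observation in your third paragraph: the join modifies only the curve inside the disc $D_{a_1}$ and leaves the ambient surface untouched, so $(S_w,\widehat\gamma_w)$ is a realization of $\widehat w$ on the same surface, and smoothing $a_{n+1}$ recovers a realization of $w$ from any realization of $\widehat w$. Running both directions --- perform the join on a minimal realization of $w$ to get $g_{\min}(\widehat w)\le g_{\min}(w)$, and smooth $a_{n+1}$ on a minimal realization of $\widehat w$ to get $g_{\min}(w)\le g_{\min}(\widehat w)$ --- yields $g_{\min}(w)=g_{\min}(\widehat w)$ with no Carter-circle count at all, and the ``if and only if'' then follows because both curves sit on the single surface $S_w$. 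That would be a legitimate, arguably cleaner proof than the paper's; but you cannot get it for free from your second paragraph as it stands. You must either correct the count to $b(\widehat w)=b(w)+1$ (and drop the spurious $2c-2$ term) or replace the counting argument entirely by the two-sided smoothing argument.
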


\begin{proof}
Let's consider minimal realizations $(S_{w},\gamma_{w}=\lambda \cup \sigma)$ and $(S_{\widehat{w}}, \gamma_{\widehat{w}})$ of $w$ and $\widehat{w}$, respectively. The behavior of the boundary of $\Omega^{(2)}_{S_w}$ and $\Omega^{(2)}_{S_{\widehat{w}}}$ at a neighborhood that only contains the vertices $a_1$ and $a_{n+1}$ in the respectively surfaces $S_{w}$ and $S_{\widehat{w}}$ is describing in the Figure \ref{grasu}. Then $b(\widehat{w})=b(w)-1$. Therefore 
\begin{center}
$g(S_{\widehat{w}})=\frac{(n+1)+2+b(\widehat{w})}{2}=\frac{(n+1)+2+b(w)-1}{2} =g(S_{w})$.
\end{center}

\begin{figure}[ht]
\begin{center}
\includegraphics[scale=0.45]{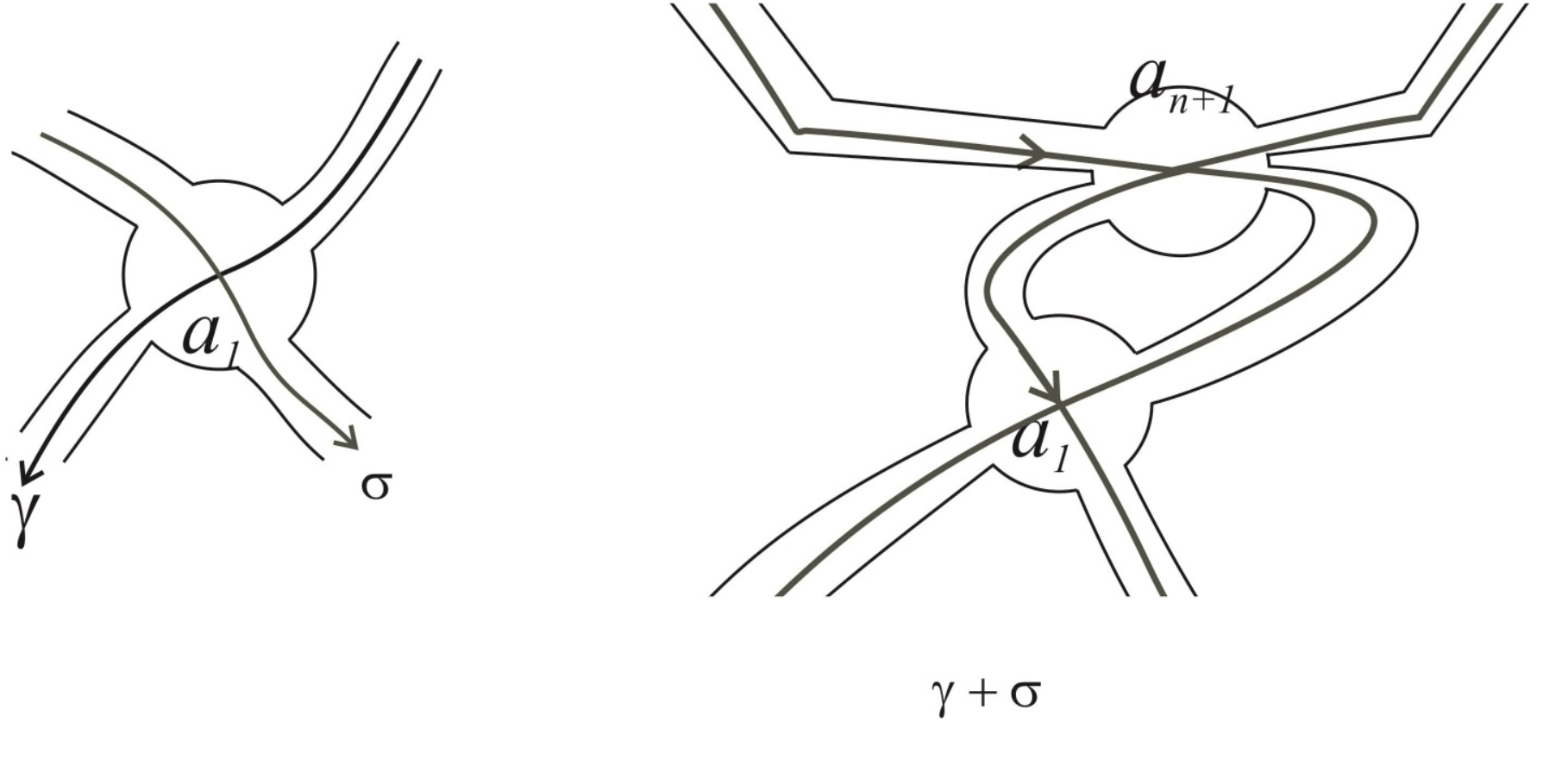}
\caption{A local representations of the minimal realization of a join of a oriented normal curve of two components}
\label{grasu}
\end{center}
\end{figure}
\end{proof}

\section{Intersection number of $PL$-oriented normal curves}
A very useful characteristic of the relative position of two curves on an orientable surface is their intersection number. This number has been defined in \cite{FoMa} and \cite{Fu}. In this section we recall such definition and we prove some of its most relevant properties.
 
Let  $\gamma_1$ and $\gamma_2$ be two $PL$-normal curves on a surface $\Sigma$, we say that they are \textit{transverse} denoted $\gamma_1 \bot \gamma_2$, if $(\Sigma,\gamma_1 \cup \gamma_2)$ is in $\mathcal{D}$, in other words, if $\gamma_1 \cup \gamma_2$ is an oriented normal curve of two components of $\Sigma$.
  
\begin{definition}
Let $\gamma_1$ and $\gamma_2$ be two oriented $PL$-normal curves on a surface $\Sigma$, with $\gamma_1 \bot \gamma_2$. For every $c \in \gamma_1 \cap \gamma_2$, let $\epsilon^{\gamma_1 \gamma_2}_{c} \in \{1,-1\}$ be defined as follows: $\epsilon^{\gamma_1 \gamma_2}_{c}=1$ (or $\epsilon^{\gamma_1 \gamma_2}_{c}=-1$) if when $\gamma_1$ travels the crossing point $c$ positively (negatively).
\end{definition}

From the previous definition, $\epsilon^{\gamma_1 \gamma_2}_{c}=-\epsilon^{\gamma_2 \gamma_1}_{c}$, for every $c\in \gamma_1 \cap \gamma_2$.

\begin{definition}[Intersection pairing]
Let $\gamma_1$ and $\gamma_2$ to be two normal curves on a surface $\Sigma$ that intersect transversally with $\gamma_1 \cap \gamma_2 = \{c_1,...,c_t\}$. 

We define the intersection pairing from $\gamma_1$ to $\gamma_2$, denoted by $\left\langle \gamma_1,\gamma_2\right\rangle$, as the sum  $\epsilon^{\gamma_1 \gamma_2}_{c_1}+ \cdots +\epsilon^{\gamma_1 \gamma_2}_{c_k}$.
\end{definition}

The following theorem summarizes some relevant properties of the intersection pairing. Its proof is a direct consequence of what we did before, so we will omit it. 

\begin{theorem}
\label{Prop} Let $\varphi_1$, $\varphi_2$ and $\varphi_3$ be oriented normal curves on a surface $\Sigma$. Suppose that they are transverses, then  
\begin{enumerate}
\item $\left\langle \varphi_1,\varphi_2 \right\rangle=-\left\langle \varphi_2,\varphi_1 \right\rangle$,
\item $\left\langle \varphi_1,\varphi_2 +\varphi_3\right\rangle=\left\langle \varphi_2,\varphi_1 \right\rangle+\left\langle \varphi_1,\varphi_3 \right\rangle$ and
\item for every $n \in \mathbb{Z}$, $\left\langle n \varphi_1,\varphi_2 \right\rangle=n \left\langle \varphi_1,\varphi_2 \right\rangle$
\end{enumerate}
\end{theorem}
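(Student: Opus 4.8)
The plan is to derive all three items from two elementary observations about the local sign $\epsilon^{\gamma_1\gamma_2}_c$. First, $\epsilon^{\gamma_1\gamma_2}_c$ is a purely local invariant: it depends only on the two oriented germs of $\gamma_1$ and $\gamma_2$ at $c$ together with the fixed orientation of $\Sigma$ (this is exactly what Figure \ref{cross} and the notion of travelling a crossing positively or negatively encode). Second, reversing the orientation of either strand reverses that germ and hence flips the sign. Item (1) is then immediate: since $\gamma_1\cap\gamma_2=\gamma_2\cap\gamma_1$ and, as already noted after the definition, $\epsilon^{\gamma_1\gamma_2}_c=-\epsilon^{\gamma_2\gamma_1}_c$ for every $c$ in this set, summing over $c$ gives $\langle\varphi_1,\varphi_2\rangle=-\langle\varphi_2,\varphi_1\rangle$.

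For item (2) I would read $\varphi_2+\varphi_3$ as the oriented $PL$-normal curve whose components are those of $\varphi_2$ together with those of $\varphi_3$ (additivity then being understood in the second argument). Using a barycentric subdivision and the perturbation argument already invoked for normal curves (the one letting us assume $\rho_\beta(a_i,a_j)>2$), we may assume $\varphi_1\cup\varphi_2\cup\varphi_3$ is in general position with only double points, so that $\varphi_1\cap(\varphi_2+\varphi_3)=(\varphi_1\cap\varphi_2)\sqcup(\varphi_1\cap\varphi_3)$. At a point $c\in\varphi_1\cap\varphi_2$ the germ of $\varphi_2+\varphi_3$ through $c$ is the germ of $\varphi_2$, so by the locality observation $\epsilon^{\varphi_1,\varphi_2+\varphi_3}_c=\epsilon^{\varphi_1\varphi_2}_c$, and symmetrically on $\varphi_1\cap\varphi_3$. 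Splitting the defining sum over the two disjoint pieces then yields $\langle\varphi_1,\varphi_2+\varphi_3\rangle=\langle\varphi_1,\varphi_2\rangle+\langle\varphi_1,\varphi_3\rangle$.

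For item (3) I would argue by cases on the sign of $n$. If $n=0$ both sides are $0$. If $n\ge 1$, pass to a barycentric subdivision in which $\varphi_1$ has a regular neighbourhood and replace $n\varphi_1$ by $n$ pairwise disjoint parallel push-offs $\varphi_1^{(1)},\dots,\varphi_1^{(n)}$ of $\varphi_1$, each a normal curve transverse to $\varphi_2$ with the same pattern of crossings and the same signs as $\varphi_1$ (again by locality); applying item (2) $n-1$ times gives $\langle n\varphi_1,\varphi_2\rangle=\sum_{i=1}^{n}\langle\varphi_1^{(i)},\varphi_2\rangle=n\langle\varphi_1,\varphi_2\rangle$. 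If $n<0$, write $n=-m$ with $m\ge 1$; reversing the orientation of $\varphi_1$ flips every $\epsilon^{\varphi_1\varphi_2}_c$, so $\langle-\varphi_1,\varphi_2\rangle=-\langle\varphi_1,\varphi_2\rangle$, and combining this with the case $m\ge 1$ gives the claim.

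The only step that is more than bookkeeping is making the locality observation precise: one must check that $\epsilon^{\gamma_1\gamma_2}_c$ is unaffected by the isotopies and barycentric subdivisions used to achieve general position and to build the parallel push-offs, i.e. that it genuinely is a function of the two oriented germs and the orientation of $\Sigma$ alone. This is exactly the content of the transversality setup and the figure-based definition of positive and negative travel of a crossing, so once that is granted the argument is the elementary $\pm 1$ count above; it also records the skew-symmetry and bilinearity that make $\langle\cdot,\cdot\rangle$ the well-defined pairing on $H_1(\Sigma,\mathbb{Z})$ used in Theorem \ref{homolo}.
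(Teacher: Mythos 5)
Your proof is correct and is precisely the elementary sign-counting argument the paper has in mind when it declares the result ``a direct consequence of what we did before'' and omits the proof: item (1) from the already-noted identity $\epsilon^{\gamma_1\gamma_2}_{c}=-\epsilon^{\gamma_2\gamma_1}_{c}$, item (2) by splitting the defining sum over the disjoint crossing sets, and item (3) via parallel copies and orientation reversal. Note that in doing so you have silently corrected the misprint in item (2) of the statement, whose right-hand side should read $\left\langle \varphi_1,\varphi_2 \right\rangle+\left\langle \varphi_1,\varphi_3 \right\rangle$ rather than $\left\langle \varphi_2,\varphi_1 \right\rangle+\left\langle \varphi_1,\varphi_3 \right\rangle$ (otherwise it would contradict item (1)).
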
 

A proof from the point of view of the differential topology of the next theorem can be found in \cite{GiPo}, but here we present a combinatorial one. 

\begin{theorem}
Let $\gamma_1$ and $\gamma_2$ to be two oriented normal curves on a surface $\Sigma$. Then, there exist an oriented normal curve $\gamma^{p}_1$, homologous to $\gamma_1$, such that $\gamma^{p}_1 \bot \gamma_2$. Moreover, $\gamma^{p}_1 \bot \gamma_1$ and $\left\langle \gamma_1,\gamma^{p}_1\right\rangle=0$.
\end{theorem}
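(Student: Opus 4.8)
The plan is to construct $\gamma_1^p$ explicitly as a parallel copy (a push-off) of $\gamma_1$, obtained by running a curve alongside $\gamma_1$ inside a thin regular neighborhood of $\gamma_1$ in $\Sigma$, and then to count intersections combinatorially at the places where the two curves are forced to cross. First I would work in a barycentric subdivision $\Sigma^{(k)}$ fine enough that the closed star $st(v,\Sigma^{(k)})$ of every vertex $v$ on $\gamma_1$ is disjoint from $\gamma_2$ except along edges, and fine enough that a whole collar of $\gamma_1$ lies inside $\Omega^{(k)}_\Sigma$-type neighborhoods; along each edge of $\gamma_1^{(k)}$ I take the neighboring parallel edge on a fixed side, so that away from the crossing points $\gamma_1^p$ is literally a parallel $PL$-path that meets neither $\gamma_1$ nor $\gamma_2$. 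This immediately gives $\gamma_1^p \bot \gamma_2$ away from the crossings of $\gamma_1$, and $\gamma_1^p$ is homologous to $\gamma_1$ because $\gamma_1^p - \gamma_1$ bounds the annular strip (or union of strips) swept out between them in the collar.

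**The crossing analysis** is the core of the argument. At each crossing point $a_t$ of $\gamma_1$ with itself, the parallel push-off on one fixed side must switch which of the two local strands of $\gamma_1$ it runs beside, and this forces exactly one transverse intersection of $\gamma_1^p$ with $\gamma_1$ — and, when $a_t$ is also a crossing of $\gamma_2$ or lies on an edge adjacent to $\gamma_2$, one may also need to route $\gamma_1^p$ across $\gamma_2$; but by choosing the side of the push-off consistently with the orientation convention of Figure~\ref{cross} one arranges that the contributions at $a_t$ come in canceling pairs. Concretely, near $a_t$ the four local arcs of $\gamma_1$ divide $st(a_t,\Sigma^{(k)})$ into four sectors; the push-off, entering in one sector and leaving in the adjacent one on its chosen side, crosses $\gamma_1$ once, and the sign $\epsilon^{\gamma_1\gamma_1^p}$ of that crossing is $+1$ relative to one of the two passages through $a_t$ and $-1$ relative to the other, because $\gamma_1$ traverses $a_t$ once "positively" and once "negatively" in the sense of Definition~\ref{nor}. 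Summing over the two passages at each self-crossing of $\gamma_1$ — and there is nothing else contributing, since on edges the push-off is parallel — gives $\left\langle \gamma_1,\gamma_1^p\right\rangle = \sum_{a_t}(+1-1) = 0$. For the statement $\gamma_1^p \bot \gamma_2$ one then invokes the normalization already granted in the paper: any residual non-transverse contact of $\gamma_1^p$ with $\gamma_2$ can be removed by a small detour within the collar (a detour move / barycentric refinement as in the discussion after Theorem~\ref{emb}), and likewise $\gamma_1^p\bot\gamma_1$ holds by construction once the finitely many self-crossing events are put in the transverse position of Figure~\ref{cross}.

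**The main obstacle** I expect is purely bookkeeping: making the sign computation at a self-crossing airtight, i.e.\ verifying that the two passages of $\gamma_1$ through a given crossing point contribute opposite signs $\epsilon^{\gamma_1\gamma_1^p}_c$ to the push-off intersection regardless of the combinatorial type of the crossing and the chosen side. This is exactly the kind of case check (four local sectors, two orientations, one choice of side) that is routine but easy to botch; I would handle it by fixing once and for all the "turn left" convention already used in the proof of Theorem~\ref{min}, defining the push-off to always stay on the left of $\gamma_1$, and then reading off the signs directly from the oriented picture of $st(a_t,\Sigma^{(k)})$. A secondary, milder point is ensuring the collar can be chosen to simultaneously avoid $\gamma_2$ wherever possible and that the finitely many forced crossings with $\gamma_2$ can be pushed off entirely (rather than merely made transverse) — this follows because one is free to take the side of the push-off that keeps it on the $\gamma_1$-side away from $\gamma_2$ at each edge, and $\Sigma$ is orientable so "left" is globally well defined along $\gamma_1$.
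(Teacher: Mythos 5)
Your proposal is correct and follows essentially the same route as the paper: the paper also constructs $\gamma_1^{p}$ as a parallel push-off, realized by local modifications of $\gamma_1$ inside the closed stars of its vertices (crossing points and ordinary points) in a barycentric subdivision, and concludes $\left\langle \gamma_1,\gamma_1^{p}\right\rangle=0$ by noting that the two intersections of $\gamma_1$ with $\gamma_1^{p}$ produced at each self-crossing carry opposite signs. Your collar/side-choice phrasing and the paper's star-by-star $PL$ construction are the same argument, with the same cancellation at each crossing.
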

\begin{proof}
Choose a crossing point $a_i$ of $\gamma_1$ and let's consider  $st(a_i,\Sigma)$, $st(a_i,\Sigma{{(1)}})$ and $st(a_i,\Sigma^{(2)})$ be the closed stars of the vertex $a_i$ in $\Sigma$, $\Sigma^{(1)}$ and $\Sigma^{(2)}$, respectively, see Figure \ref{st1}-$a$. Now, apply the modification, on $\gamma_1$, showed in the Figure \ref{st1}-$b$. 

\begin{figure}[ht]
\begin{center}
\includegraphics[scale=0.2]{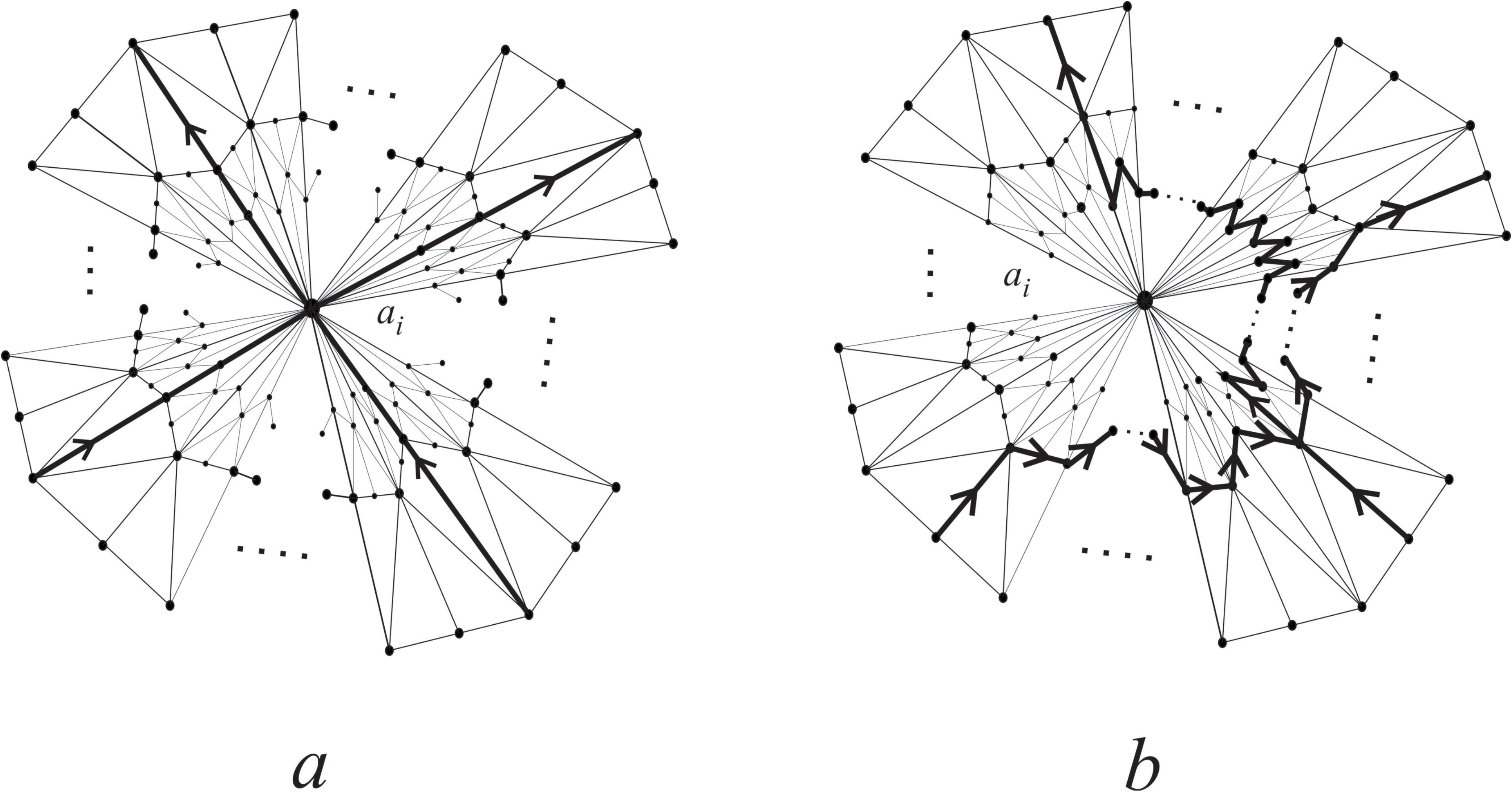}
\caption{Changing crossing point}
\label{st1}
\end{center}
\end{figure}

Now, if $v_j$ is an ordinary vertex of $\gamma_1$, that is not a crossing, then we proceed as above, but we only modify the part of $\gamma_1$ inside of closed star $\overline{st}(v_j)$ of $\Sigma$ as Figure \ref{st2} shows.

\begin{figure}[ht]
\begin{center}
\includegraphics[scale=0.2]{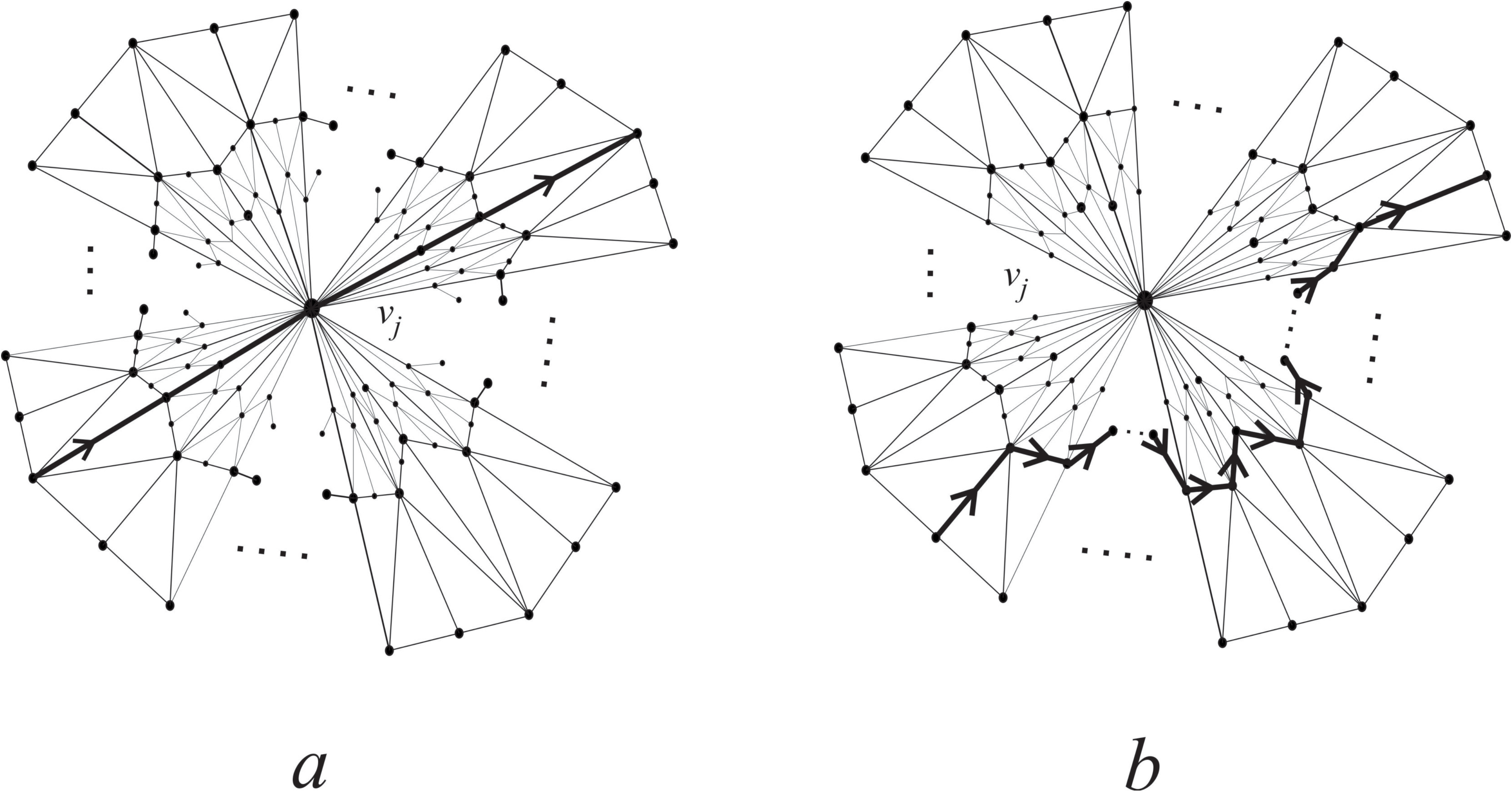}
\caption{Changing vertex point}
\label{st2}
\end{center}
\end{figure}

The resultant oriented $PL$-normal curve obtained by applying the above modifications on all the vertices of $\gamma_1$ is denoted by $\gamma^{p}_1$. As we see, this curve is homologous to $\gamma_1$ and $\gamma^{p}_{1} \bot \gamma_i$, $i=1,2$.  

Suppose that $a_1$,...,$a_{n}$ are the crossings points of $\gamma_1$. If $a'_i$ and $a''_i$ denote the two intersections of $\gamma_1$ and $\gamma^{p}_1$ at $st(a_i,\Sigma)$, then $\epsilon^{\gamma_1 \gamma^{p}_1}_{a'_i}+\epsilon^{\gamma_1 \gamma^{p}_1}_{a''_i}=0$. As a consequence $\left\langle \gamma_1,\gamma^{p}_1\right\rangle=0$.

\end{proof}

The curve $\gamma^{p}_1$ given in above theorem is called the \textit{parallel normal curve} associated to $\gamma_1$.

\begin{definition}[Intersection number]
Let $\varphi_1$ and $\varphi_2$ two homology classes in $H_1(\Sigma,\mathbb{Z})$ with representative transverses normal curves $\gamma_1$ and $\gamma_2$, respectively. We define the intersection number between $\varphi_1$ and $\varphi_2$, denoted $\left\langle \varphi_1,\varphi_2 \right\rangle$, as $\left\langle \gamma_1, \gamma_2 \right\rangle$.
\end{definition}

The following theorem proves the well definition of the homology intersection.

\begin{theorem}
If $\gamma_i$ and  $\gamma'_i$, $i=1,2$, are curves on a surface $\Sigma$, such that $\gamma_i \bot \gamma'_i$, $i=1,2$, $\gamma_1$ is homologous to $\gamma_2$ and $\gamma'_1$ is homologous to $\gamma'_2$. Then, $\left\langle \gamma_1,\gamma'_1\right\rangle=\left\langle \gamma_2,\gamma'_2\right\rangle$.
\end{theorem}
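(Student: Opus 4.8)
The statement asserts that the intersection pairing $\langle\gamma_1,\gamma_1'\rangle$ depends only on the homology classes of $\gamma_1$ and $\gamma_1'$, which is exactly the well-definedness of the intersection number introduced above. The plan is to reduce to the case where one curve is held fixed while the other is changed by a homology, and then show that the pairing is unchanged. Concretely, it suffices to prove two things: (a) if $\gamma_1$ is homologous to $\gamma_2$ and both are transverse to a fixed curve $\gamma'$, then $\langle\gamma_1,\gamma'\rangle=\langle\gamma_2,\gamma'\rangle$; and (b) the symmetric statement with roles reversed. Statement (b) follows from (a) together with antisymmetry (Theorem~\ref{Prop}(1)): $\langle\gamma_1,\gamma_1'\rangle=-\langle\gamma_1',\gamma_1\rangle=-\langle\gamma_2',\gamma_1\rangle=\langle\gamma_1,\gamma_2'\rangle$, and then applying (a) once more in the first variable gives $\langle\gamma_1,\gamma_2'\rangle=\langle\gamma_2,\gamma_2'\rangle$. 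So the whole theorem rests on (a).

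To prove (a), I would argue that if $\gamma_1$ and $\gamma_2$ represent the same class in $H_1(\Sigma,\mathbb{Z})$, then (working with the simplicial/$PL$ structure, possibly after a common barycentric subdivision so that everything is transverse to $\gamma'$ in the sense of $\mathcal{D}$) their difference $\gamma_1-\gamma_2$ bounds a $2$-chain $\partial c=\gamma_1-\gamma_2$, where $c$ is an integer combination of faces of some $\Sigma^{(k)}$. Using the bilinearity and $\mathbb{Z}$-linearity of the pairing established in Theorem~\ref{Prop}(2),(3), it is enough to check that $\langle\partial c,\gamma'\rangle=0$ for $c$ a single oriented face $\delta$. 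The boundary $\partial\delta$ is a small $PL$-circle; after subdividing, $\gamma'$ meets $st(\cdot,\Sigma^{(k)})$ containing $\delta$ either not at all (in which case the pairing is trivially zero) or in a single arc passing ``straight through,'' and in that situation the arc $\gamma'$ enters and leaves the triangle $\delta$ across two of its sides, contributing two crossings with $\partial\delta$ of opposite sign, so $\langle\partial\delta,\gamma'\rangle=0$. This is the same local cancellation phenomenon already exploited in the proof of the previous theorem, where $\epsilon^{\gamma_1\gamma_1^p}_{a_i'}+\epsilon^{\gamma_1\gamma_1^p}_{a_i''}=0$.

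An alternative, perhaps cleaner, route avoids choosing an explicit bounding chain: use the parallel-curve construction. Given $\gamma_1\sim\gamma_2$, both transverse to $\gamma'$, one can connect them by a sequence of elementary moves (each supported in a single closed star, pushing the curve across one triangle), since homologous $PL$-curves on a surface differ by such moves up to subdivision; each elementary move changes the intersection pattern with $\gamma'$ only within one star, where again a transverse arc of $\gamma'$ either misses the star or crosses it producing a pair of cancelling signs. Summing over the moves gives $\langle\gamma_1,\gamma'\rangle=\langle\gamma_2,\gamma'\rangle$.

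The main obstacle is the bookkeeping in the local step: one must verify carefully that after a suitable common barycentric subdivision, near the support of the elementary move (or near the face $\delta$ of the bounding chain) the curve $\gamma'$ really does meet the relevant closed star in a controlled way — a single ``straight'' transverse arc or nothing — so that the sign contributions genuinely cancel, and that the normal-curve conditions of $\mathcal{D}$ (the $\rho_\beta(a_i,a_j)>2$ separation of crossings, the local picture of Figure~\ref{cross}) are preserved throughout. Once that local lemma is pinned down, the global conclusion follows immediately from $\mathbb{Z}$-linearity and antisymmetry as above.
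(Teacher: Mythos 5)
Your proposal is correct and follows essentially the same route as the paper: reduce to varying one curve at a time, write the difference as the boundary of a $2$-chain, decompose into triangles, and use bilinearity together with the vanishing of $\left\langle \partial\delta,\gamma'\right\rangle$ for a single face (the paper cites the closed Jordan curve theorem where you give the equivalent local enter/leave sign-cancellation argument). Your explicit antisymmetry step to handle the second variable is a detail the paper leaves implicit, but the argument is the same.
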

\begin{proof}
Let's take curves $\gamma \bot \gamma_i$, $i=1,2$, on a surface $\Sigma$, with $\gamma_1$ homologous to $\gamma_2$, and we will prove that $\left\langle \gamma,\gamma_1 \right\rangle=\left\langle \gamma,\gamma_2 \right\rangle$.

Since $\gamma_1$ and $\gamma_2$ are homologous curves, then there exists a $2$-chain $\mu$ in $\Sigma$ such that $\gamma_1=\gamma_2 + \partial(\mu)$. We write $\mu= \sum^{k}_{i=1} \delta_i $, where $\delta_i$ is a triangle, $i=1,2,...,k$. Since, $\partial(\delta_i)$ is the border of a disc in $\Sigma$, then from the closed Jordan curve theorem, for every oriented normal curve $\lambda$ transverses to $\partial(\delta_i)$ we have that $\left\langle \lambda, \partial(\delta_i)\right\rangle=0$, so  $ \left\langle \gamma_1, \partial(\Sigma_i)\right\rangle=0$ for every $i=1,2,...,k$, therefore $\left\langle \gamma, \gamma_1 \right\rangle=\left\langle \gamma,\gamma_2 \right\rangle$.
\end{proof}

A direct consequence of the previous theorem is the following corollary.
\begin{corollary}
Suppose that  $\gamma_1$ and $\gamma_2$ are two oriented normal curves on a surface $\Sigma$ that intersect transversely. If $\gamma_2$ is null-homologous, then $\left\langle \gamma_1,\gamma_2\right\rangle=0$.  
\end{corollary}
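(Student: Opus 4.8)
The plan is to reduce the statement to the well‑definedness theorem just proved, using the fact that the zero homology class has a representative that is literally disjoint from $\gamma_1$. First I would produce such a representative: pick any triangle $\delta$ of $\Sigma$ (passing to a barycentric subdivision if one prefers, which is harmless by the invariance just established); since $\gamma_1$ is an oriented closed trail in the $1$‑skeleton $\Gamma_\Sigma$, its intersection with $\operatorname{int}(\delta)$ is empty. Subdividing $\delta$ once, let $\epsilon$ be the $PL$‑circle bounding the central triangle of $\delta^{(1)}$, oriented arbitrarily. Then $\epsilon$ is an oriented $PL$‑normal curve in the sense of Definition \ref{nor} with $C_\epsilon=\emptyset$ and $O_\epsilon=V_\epsilon$, it is null‑homologous because it bounds a disc, and $\gamma_1\cap\epsilon=\emptyset$, so $\gamma_1\bot\epsilon$ trivially and $\langle\gamma_1,\epsilon\rangle=0$ because the defining sum is empty.

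Next I would invoke the preceding theorem (well‑definedness of the homology intersection) with the four curves $\gamma_1,\gamma_1,\gamma_2,\epsilon$: we have $\gamma_1\bot\gamma_2$ by hypothesis and $\gamma_1\bot\epsilon$ as above, $\gamma_1$ is homologous to itself, and $\gamma_2$ is homologous to $\epsilon$ since both are null‑homologous. Hence $\langle\gamma_1,\gamma_2\rangle=\langle\gamma_1,\epsilon\rangle=0$, which is the claim.

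Alternatively — and this is the quicker route once homological invariance is in place — one may argue formally: letting $\varphi_1,\varphi_2\in H_1(\Sigma,\mathbb Z)$ be the classes of $\gamma_1,\gamma_2$, the Intersection number is by definition a function of $\varphi_1,\varphi_2$ only, and $\varphi_2=0$; then property (3) of Theorem \ref{Prop} with $n=0$, together with the antisymmetry (property (1)), gives $\langle\varphi_1,0\rangle=0$, i.e. $\langle\gamma_1,\gamma_2\rangle=0$.

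The only point that needs a little care — and hence the "main obstacle", such as it is — is verifying that the auxiliary curve $\epsilon$ genuinely satisfies Definition \ref{nor} and can be placed in a face avoided by $\gamma_1$; this is where the freedom to pass to a barycentric subdivision (innocuous for the intersection number, by the theorem just proved) is used. Everything else is immediate from the bilinearity in Theorem \ref{Prop} and the Jordan‑curve argument already appearing in the proof of the preceding theorem.
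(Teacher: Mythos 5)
Your proof is correct and follows essentially the same route as the paper, which derives the corollary directly from the well-definedness theorem for the homology intersection; your only addition is to make explicit a convenient null-homologous representative (a small triangle boundary disjoint from $\gamma_1$) so that the comparison $\left\langle \gamma_1,\gamma_2\right\rangle=\left\langle \gamma_1,\epsilon\right\rangle=0$ is immediate. Both your main argument and the bilinearity shortcut are fine.
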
 

 \begin{theorem}
Let $\varphi \in H_1(\Sigma,\mathbb{Z})$. If for every $\lambda \in H_1(\Sigma,\mathbb{Z})$, $\left\langle \varphi, \lambda \right\rangle=0$, then $\varphi$ is null-homologue.
\end{theorem}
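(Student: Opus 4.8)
The plan is to invoke the non-degeneracy of the intersection pairing on a compact orientable surface, which is the classical Poincaré-duality statement phrased for curves. Concretely, I would work with the compact surface $\Sigma$: if $\Sigma$ has boundary, I first reduce to the closed case by doubling $\Sigma$ along its boundary (or by capping the boundary circles with discs, noting that boundary-parallel curves are already null-homologous so they do not affect the hypothesis), so that $H_1(\Sigma,\mathbb{Z})$ is a finitely generated free abelian group of rank $2g$ with a symplectic basis $\{\alpha_1,\beta_1,\dots,\alpha_g,\beta_g\}$ realized by simple closed curves meeting in the standard pattern $\langle \alpha_i,\beta_j\rangle=\delta_{ij}$, $\langle \alpha_i,\alpha_j\rangle=\langle\beta_i,\beta_j\rangle=0$. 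This basis exists by the classification of compact orientable surfaces, and each basis class is representable by an embedded, hence normal (after barycentric subdivision), oriented curve transverse to any given curve, by the transversality theorem proved earlier in the excerpt.

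The key steps, in order, would be: (i) fix a representative oriented normal curve $\gamma$ of $\varphi$; (ii) write $\varphi = \sum_i (m_i\,\alpha_i + n_i\,\beta_i)$ in the symplectic basis, where $m_i,n_i\in\mathbb{Z}$; (iii) pair $\varphi$ successively against each $\alpha_i$ and each $\beta_i$, using the bilinearity, antisymmetry and integer-scaling properties of $\langle\cdot,\cdot\rangle$ established in Theorem~\ref{Prop}, together with well-definedness on homology classes from the preceding theorem. Pairing with $\beta_i$ gives $0=\langle\varphi,\beta_i\rangle = m_i\langle\alpha_i,\beta_i\rangle = m_i$, and pairing with $\alpha_i$ gives $0=\langle\varphi,\alpha_i\rangle = n_i\langle\beta_i,\alpha_i\rangle = -n_i$; hence all coefficients vanish and $\varphi=0$ in $H_1(\Sigma,\mathbb{Z})$, i.e. $\gamma$ is null-homologue. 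In the bounded case one also pairs against the boundary-parallel classes, which contributes nothing new since those pairings are already zero by the Jordan-curve argument used in the well-definedness proof.

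The main obstacle I expect is not the algebra — that is a three-line computation once the setup is in place — but rather assembling the geometric input cleanly in the $PL$ / normal-curve language the paper insists on: namely, (a) exhibiting the symplectic basis by explicit oriented $PL$-normal curves on (a subdivision of) $\Sigma$, and (b) justifying that it suffices to test $\varphi$ against these finitely many generators rather than against *all* of $H_1(\Sigma,\mathbb{Z})$. Point (b) is immediate from bilinearity (if $\langle\varphi,\gamma_i\rangle=0$ for a generating set then $\langle\varphi,\lambda\rangle=0$ for every $\lambda$), so in fact the hypothesis can be weakened to "zero against a generating set," which is exactly the form needed for the application to Theorem~\ref{homolo}. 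Point (a) is where one must be slightly careful: I would handle it by citing the standard polygonal (identification-space) model of a genus-$g$ surface, in which the $\alpha_i,\beta_i$ appear as edges of the fundamental polygon and can be pushed to transverse embedded circles, then subdivided barycentrically to become $PL$-normal curves — all of which is routine and can be compressed to a sentence, since the paper already treats barycentric subdivisions and surface realizations as black boxes.
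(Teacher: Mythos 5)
Your proposal is correct and follows essentially the same route as the paper's own proof: both expand $\varphi$ in the standard symplectic basis $\{a_i,b_i\}$ of the closed genus-$g$ surface (with $\left\langle a_i,b_i\right\rangle=1$ and all other pairings zero) and read off the vanishing of the coefficients from the hypothesis. Your additional remarks on the bounded case and on testing only against a generating set are sensible refinements but do not change the argument.
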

\begin{proof}
Every oriented, connected and closed surface $\Sigma$ of genus $g$ can be represented as in Figure \ref{genecur}, where the curves $a_i$, $b_i$, $i=1,2,...,g$, generate the first homology group $H_1(\Sigma,\mathbb{Z})$. Moreover, they satisfy: $\left\langle a_i,a_j\right\rangle=\left\langle b_i,b_j\right\rangle=0$ and $\left\langle a_i,b_i\right\rangle=1$, for every $i,j=1,2,...,g$.

\begin{figure}[ht]
\begin{center}
\includegraphics[scale=0.3]{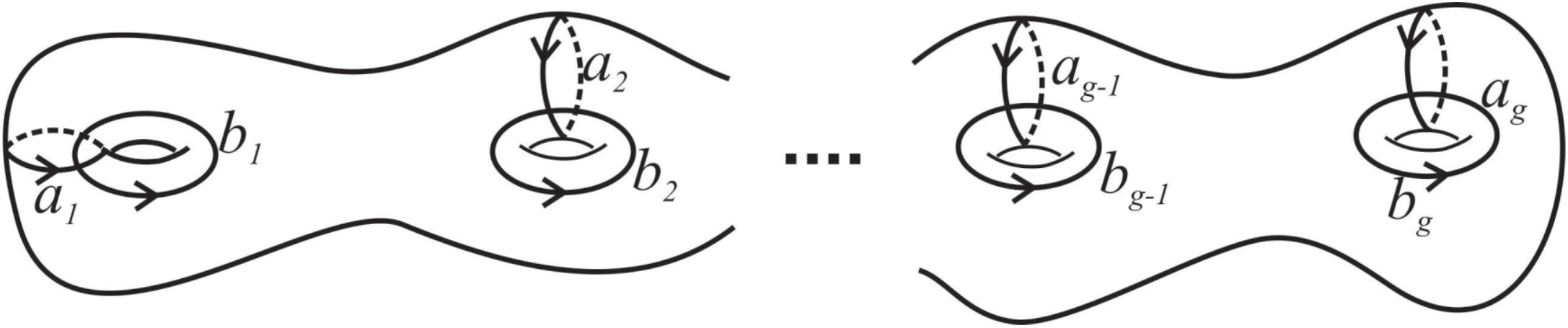}
\caption{Generators of the first homology groups of $\Sigma$}
\label{genecur}
\end{center}
\end{figure}

Let $\varphi \in H_1(\Sigma,\mathbb{Z})$, then $\varphi = \sum g_i a_i +\sum h_j b_j$, with $\left\langle \varphi , a_j\right\rangle=h_j$ and $\left\langle \varphi , b_i\right\rangle=g_i$. Therefore, $\varphi$ is null-homologous if and only if   $\left\langle \varphi , a_j\right\rangle=0$ and $\left\langle \varphi , b_i\right\rangle=0$.

\end{proof}

\section{A short solution of the signed Gauss word}

Let $w$ be a signed Gauss word and let $(S_w,\gamma_w)$ be a minimal realization of $w$. Chose a $k\geq 2$ and denote by $\varphi_w$, $\varphi_{a_1}$,...,$\varphi_{a_n}$ the homology classes represented by the oriented $PL$-normal curves $\gamma_w$, $\gamma^{(k)}_{a_1}$,...,$\gamma^{(k)}_{a_n}$ in $H_1(S_w,\mathbb{Z})$, respectively.

\begin{theorem}
With the notation above. $H_1(S_w,\mathbb{Z})=(0)$ if and only if, for every $i,j=1,2,...,n$, $\alpha_{a_j}(w)=\left\langle \varphi_w, \varphi_{a_i}\right\rangle=0$ and $\beta_{a_i a_j}(w)=\left\langle \varphi_{a_j}, \varphi_{a_i}\right\rangle=0$.
\end{theorem}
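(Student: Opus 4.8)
The plan is to combine the structural results already established: the generating set $\{\varphi_w,\varphi_{a_1},\ldots,\varphi_{a_n}\}$ for $H_1(S_w,\mathbb{Z})$ (the cited theorem from \cite{CaEl}), together with Theorem \ref{homolo} stating that a class pairing trivially with everything is null-homologous, and the bilinearity/antisymmetry of the intersection pairing from Theorem \ref{Prop}. The forward direction is immediate: if $H_1(S_w,\mathbb{Z})=(0)$, then every homology class, in particular each $\varphi_w$ and $\varphi_{a_i}$, is zero, so all the pairings $\langle\varphi_w,\varphi_{a_i}\rangle$ and $\langle\varphi_{a_j},\varphi_{a_i}\rangle$ vanish by the corollary that pairing with a null-homologous curve gives $0$. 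Equivalently, one may note that $\langle\varphi_w,\varphi_w\rangle=0$ automatically by antisymmetry, so the only conditions that carry content are exactly the $\alpha_{a_j}(w)$ and $\beta_{a_ia_j}(w)$.

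For the reverse direction, suppose all $\alpha_{a_j}(w)=0$ and $\beta_{a_ia_j}(w)=0$. I want to show every generator is null-homologous; by Theorem \ref{homolo} it suffices to check that each of $\varphi_w$ and $\varphi_{a_i}$ pairs to zero against every class in $H_1(S_w,\mathbb{Z})$. Since $\{\varphi_w,\varphi_{a_1},\ldots,\varphi_{a_n}\}$ spans $H_1(S_w,\mathbb{Z})$, an arbitrary class $\lambda$ can be written $\lambda = c_0\varphi_w + \sum_i c_i\varphi_{a_i}$ with $c_0,c_i\in\mathbb{Z}$. Then by linearity (Theorem \ref{Prop}, parts 2 and 3),
\begin{equation*}
\langle\varphi_w,\lambda\rangle = c_0\langle\varphi_w,\varphi_w\rangle + \sum_i c_i\langle\varphi_w,\varphi_{a_i}\rangle = 0,
\end{equation*}
using $\langle\varphi_w,\varphi_w\rangle=0$ and $\langle\varphi_w,\varphi_{a_i}\rangle=\alpha_{a_i}(w)=0$. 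Likewise
\begin{equation*}
\langle\varphi_{a_i},\lambda\rangle = c_0\langle\varphi_{a_i},\varphi_w\rangle + \sum_j c_j\langle\varphi_{a_i},\varphi_{a_j}\rangle = -c_0\alpha_{a_i}(w) + \sum_j c_j\beta_{a_j a_i}(w) = 0.
\end{equation*}
Hence each generator pairs trivially with all of $H_1(S_w,\mathbb{Z})$, so each is null-homologous by Theorem \ref{homolo}; since the generators are all zero, $H_1(S_w,\mathbb{Z})=(0)$.

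A technical caveat that must be addressed: the pairings appearing in the two displays implicitly require the relevant representative curves to be transverse, and one needs $\langle\varphi_w,\varphi_w\rangle$ and $\langle\varphi_{a_i},\varphi_{a_i}\rangle$ to make sense — this is exactly what the parallel normal curve construction (the theorem producing $\gamma_1^p$ with $\gamma_1^p\bot\gamma_1$ and $\langle\gamma_1,\gamma_1^p\rangle=0$) and the well-definedness theorem for homology intersection are for, so I would cite those to justify that all self-pairings are $0$ and that the pairing descends to $H_1$. The main (really the only) obstacle is bookkeeping: making sure the conditions are stated against the right indexing — note that $\beta_{a_ia_j}(w)=\langle\varphi_{a_j},\varphi_{a_i}\rangle$ and antisymmetry gives $\beta_{a_ia_j}=-\beta_{a_ja_i}$, so the hypothesis "$\beta_{a_ia_j}(w)=0$ for all $i,j$" is self-consistent and exactly what is needed to kill the sum $\sum_j c_j\beta_{a_ja_i}(w)$. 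No deep new idea is required beyond assembling the cited generating set with the algebraic properties of the pairing and Theorem \ref{homolo}.
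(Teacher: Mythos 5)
Your proof is correct and follows essentially the same route as the paper: expand an arbitrary class in the Cairns--Elton generating set, use bilinearity and antisymmetry of the intersection pairing to reduce all pairings to the $\alpha_{a_i}(w)$ and $\beta_{a_ia_j}(w)$, and conclude via the theorem that a class pairing trivially with everything is null-homologous. The only (cosmetic) difference is that you check the vanishing pairing condition generator by generator rather than for two arbitrary classes at once, and you make explicit the transversality/self-pairing bookkeeping that the paper leaves implicit.
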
  
\begin{proof}
Let $\gamma , \lambda \in H_{1}(\Sigma_{w},\mathbb{Z})$, since $\{\varphi_{w},\varphi_{a_1},...,\varphi_{a_n}\}$ spans $H_{1}(\Sigma_{w},\mathbb{Z})$, then there exist scalars $u,u_{1},...,u_{n}, v,v_{1},...,v_{n}$ such that $\lambda = u \varphi_{w}+u_{1}\varphi_{a_1}+\cdots + u_{n}\varphi_{a_n}$, and $\varphi = v \varphi_{w}+v_{1}\varphi_{a_1}+\cdots + u_{n}\varphi_{a-n}$. So,
\begin{center}
$\left\langle \gamma,\lambda\right\rangle=v\sum u_{i}\alpha_{a_i}(w)-u\sum v_{i}\alpha_{a_i}(w)+\sum \sum v_{i}u_{j} \beta_{a_i a_j}(w)$.
\end{center}
If we suppose that $\alpha_{a_j}(w)=0$ and $\beta_{a_i a_j}(w)=0$, for every $i,j=1,2,...,n$, then $\left\langle \gamma,\lambda\right\rangle=0$, hence $\gamma$ es null-homologous and so $H_{1}(\Sigma_{w},\mathbb{Z})=(0)$.
Reciprocally, if $H_{1}(\Sigma_{w},\mathbb{Z})=(0)$, then $\alpha_{a_j}(w)=\left\langle \varphi_{w} , \varphi_{a_j}\right\rangle=0$ and $\beta_{a_i a_j}(w)=\left\langle \varphi_{a_i}, \varphi_{a_j}\right\rangle=0$, for every $i,j=1,2,...,n$.
\end{proof}

The proof of the following theorem is straightforward from the definitions of the signed Gauss paragraphs and intersection pairing, so we will omit it. 

\begin{theorem}
Let $(\Sigma,\gamma=\gamma_1 \cup \gamma_2)$ be an oriented $PL$-normal curve with signed Gauss paragraph $w=\{w_1,w_2\}$. Denote by $S(w)$ the set of all $a^{p}$ such that $a^{p}$ occurs in $w_1$ and $a^{-p}$ occurs in $w_2$, then

\begin{center}
$\left\langle \gamma_1,\gamma_2\right\rangle=\sum_{a^{p}\in S(w)} p$.
\end{center}
\end{theorem}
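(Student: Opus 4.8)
The plan is to compute the intersection pairing $\left\langle \gamma_1,\gamma_2\right\rangle$ directly from its definition as $\sum_{c\in\gamma_1\cap\gamma_2}\epsilon^{\gamma_1\gamma_2}_c$, by identifying exactly which crossing points of $\gamma=\gamma_1\cup\gamma_2$ contribute. First I would recall that $\gamma_1\cap\gamma_2=C^{(\gamma)}$, the set of crossings shared by the two components, since the self-crossings of each $\gamma_i$ live in $C_{\gamma_i}$ and do not enter the pairing. A shared crossing $c$ arises precisely when one letter, say $a$, is traversed once by $\gamma_1$ and once by $\gamma_2$; thus it corresponds to a letter $a^{p}$ (for some sign $p$) that occurs in $w_1$ and whose partner occurrence $a^{-p}$ (recall every letter occurs exactly once in the whole paragraph, here once in a word of $w$, but as a crossing it is met twice, so its two appearances in the diagram are as $a^{p}$ along $\gamma_1$ and $a^{-p}$ along $\gamma_2$, or the reverse) lies in $w_2$. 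So the set $S(w)$ of such $a^{p}$ is exactly in bijection with $C^{(\gamma)}=\gamma_1\cap\gamma_2$.

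The heart of the argument is the sign bookkeeping: I must check that for the crossing $c$ labelled by $a^{p}\in S(w)$, one has $\epsilon^{\gamma_1\gamma_2}_c=p$. This follows from the convention fixed in the construction of signed Gauss paragraphs together with the definition of $\epsilon^{\gamma_1\gamma_2}_c$. Recall $\gamma_1$ travels $c$ positively exactly when the letter recorded in $w_1$ at that crossing is $a$ (i.e.\ the exponent $+1$), and negatively when it is $a^{-1}$; and $\epsilon^{\gamma_1\gamma_2}_c$ was defined to be $+1$ when $\gamma_1$ travels $c$ positively and $-1$ when negatively. Hence if $a^{p}$ occurs in $w_1$ then $\epsilon^{\gamma_1\gamma_2}_c = p$ by definition, and the hypothesis that $a^{-p}$ occurs in $w_2$ is just the statement that this crossing is genuinely shared by the two components. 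Summing over all such crossings gives $\left\langle \gamma_1,\gamma_2\right\rangle = \sum_{a^{p}\in S(w)} p$, as claimed.

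The only real subtlety — and the step I expect to need the most care — is the consistency of the sign convention: one must be sure that the exponent $p$ attached to the occurrence in $w_1$ is the one governing $\epsilon^{\gamma_1\gamma_2}_c$ and not its negative, and that relabelling which component is ``$\gamma_1$'' correctly flips the sign of every term (consistent with $\epsilon^{\gamma_1\gamma_2}_c=-\epsilon^{\gamma_2\gamma_1}_c$, since swapping components also swaps which word the letter $a^{p}$ versus $a^{-p}$ appears in, turning $S(w)$ for $\{w_1,w_2\}$ into $\{-p : a^{p}\in S(w)\}$ under the swap, so the sum negates as it must). Because the statement is asserted to be "straightforward from the definitions," I would present the proof as: (i) identify $\gamma_1\cap\gamma_2$ with $S(w)$; (ii) invoke the definition of $\epsilon^{\gamma_1\gamma_2}_c$ and the labelling convention to get $\epsilon^{\gamma_1\gamma_2}_c = p$ at the crossing labelled $a^{p}$; (iii) sum. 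No nontrivial topology is needed beyond what is already in Definition~\ref{nor} and the construction of signed Gauss paragraphs.
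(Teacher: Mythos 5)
Your proof is correct and is precisely the ``straightforward from the definitions'' argument the paper has in mind (the paper omits the proof entirely): you identify $\gamma_1\cap\gamma_2$ with $S(w)$, observe that the labelling convention for signed Gauss paragraphs makes $\epsilon^{\gamma_1\gamma_2}_c$ equal to the exponent $p$ of the occurrence recorded in $w_1$, and sum. Your sign bookkeeping, including the antisymmetry check under swapping the two components, is consistent with the paper's conventions, so nothing further is needed.
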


A direct consequence of the previous theorem is the next corollary.

\begin{corollary}
The homology intersection is an invariant under homeomorphisms.
\end{corollary}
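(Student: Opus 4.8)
The final statement to prove is the corollary that the homology intersection is an invariant under homeomorphisms. The plan is to reduce this to the preceding theorem, which expresses $\left\langle \gamma_1,\gamma_2\right\rangle$ purely in terms of the signed Gauss paragraph $w=\{w_1,w_2\}$ via the formula $\left\langle \gamma_1,\gamma_2\right\rangle=\sum_{a^{p}\in S(w)} p$. Since the right-hand side depends only on the combinatorial data $w$, it suffices to observe that a homeomorphism of surfaces carrying one realization to another induces an isomorphism of the associated signed Gauss paragraphs, and that $\sum_{a^{p}\in S(w)} p$ is invariant under such isomorphisms.

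First I would make precise what ``invariant under homeomorphisms'' means here: if $(\Sigma,\gamma=\gamma_1\cup\gamma_2)$ and $(\widetilde\Sigma,\widetilde\gamma=\widetilde\gamma_1\cup\widetilde\gamma_2)$ are oriented $PL$-normal curves of two components that are stably geotopic (in particular, related by an orientation-preserving homeomorphism of the ambient surfaces respecting the curves), then $\left\langle \gamma_1,\gamma_2\right\rangle=\left\langle \widetilde\gamma_1,\widetilde\gamma_2\right\rangle$. By the Lemma relating stable geotopy to isomorphism of signed Gauss paragraphs (and by Proposition \ref{propo}), the paragraphs $w=\{w_1,w_2\}$ and $\widetilde w=\{\widetilde w_1,\widetilde w_2\}$ are isomorphic. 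Then I would invoke the preceding theorem to write $\left\langle \gamma_1,\gamma_2\right\rangle=\sum_{a^{p}\in S(w)} p$ and $\left\langle \widetilde\gamma_1,\widetilde\gamma_2\right\rangle=\sum_{a^{p}\in S(\widetilde w)} p$, so everything comes down to showing $\sum_{a^{p}\in S(w)} p=\sum_{a^{p}\in S(\widetilde w)} p$.

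The remaining step is to check that the quantity $\sum_{a^{p}\in S(w)} p$ is unchanged under the three moves generating isomorphism of signed Gauss paragraphs: cyclic permutation within a word component, change of alphabet, and reordering of components. Cyclic permutation of $w_1$ or $w_2$ does not change which letters occur in $w_1$ with which sign, nor in $w_2$, so $S(w)$ is literally the same set. A change of alphabet simply renames the letters $a_i$; it carries $S(w)$ bijectively to $S(\widetilde w)$ preserving the exponents $p$, hence preserves the sum. Reordering the components $\{w_1,w_2\}$ either fixes the pair or swaps it; swapping $w_1\leftrightarrow w_2$ replaces each $a^p\in S(w)$ by $a^{-p}$, which would negate the sum — so here I would note that one must fix the ordering of the two components (equivalently, the sign of $\left\langle\gamma_1,\gamma_2\right\rangle$ flips if we swap the roles of $\gamma_1$ and $\gamma_2$, consistent with antisymmetry from Theorem \ref{Prop}), and with the component ordering fixed the sum is invariant.

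The main obstacle I anticipate is precisely this sign bookkeeping: one must be careful that ``homeomorphism'' is taken to be orientation preserving (an orientation-reversing homeomorphism would negate all the local signs $\epsilon^{\gamma_1\gamma_2}_c$ and hence the intersection number), and that the labelling/ordering of the two components is respected. Once those conventions are pinned down, the argument is immediate from the previous theorem, which is why the paper can state it as a corollary; the only content is translating ``homeomorphism invariance'' into ``isomorphism invariance of $w$'' and then reading off invariance of the explicit combinatorial formula.
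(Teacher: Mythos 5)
Your proposal is correct and follows exactly the route the paper intends: the paper gives no explicit proof, simply declaring the corollary ``a direct consequence of the previous theorem,'' i.e.\ of the formula $\left\langle \gamma_1,\gamma_2\right\rangle=\sum_{a^{p}\in S(w)} p$, which depends only on the signed Gauss paragraph and hence only on the stable geotopy (isomorphism) class. Your additional bookkeeping about orientation-preservation and the ordering of the two components is a sensible clarification of conventions the paper leaves implicit, not a different argument.
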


Let $w=a_iw_{a_i}a^{-1}_i \widetilde{w}_{a_i}$ be a signed Gauss word, and let $S_{a_i}(w)$ be the set of all $a^{p}$ that occurs in $w_{a_i}$.

\begin{corollary}
For every $i=1,2,...,n$, $\alpha_{a_i}(w)=\sum_{a^{p}\in S_{a_i}(w)} p$.
\end{corollary}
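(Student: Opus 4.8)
The plan is to recognize this final corollary as the immediate specialization of the theorem computing $\left\langle \gamma_1,\gamma_2\right\rangle=\sum_{a^p\in S(w)} p$ for a two-component curve, applied to the splitting construction of \S3.3. First I would recall that $\alpha_{a_i}(w)=\left\langle \varphi_w,\varphi_{a_i}\right\rangle$ by definition, where $\varphi_{a_i}$ is the class of $\gamma^{(k)}_{a_i}$, one of the two components obtained by splitting the crossing $a_i$ of $\gamma_w$ as in Figure \ref{reali2}. Writing $w=a_i w_{a_i} a_i^{-1}\widetilde{w}_{a_i}$, the splitting produces $\gamma^{(k)}_{w^{a_i}}=\gamma'_{a_i}\wedge\gamma^{(k)}_{a_i}$, a two-component normal curve whose signed Gauss paragraph is $\{w_{a_i},\widetilde{w}_{a_i}\}$ — this is exactly the remark made just after the spanning theorem in \S3.3. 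By Lemma \ref{homolo1}, $\gamma^{(k)}_{a_i}$ is homologous to the limiting curve and, together with $\gamma'_{a_i}$, the pair is homologous to $\gamma_w$; in particular the homology class $\varphi_w$ is represented (up to the parallel-curve adjustment) by $\gamma'_{a_i}\cup\gamma^{(k)}_{a_i}$, so $\left\langle \varphi_w,\varphi_{a_i}\right\rangle=\left\langle \gamma'_{a_i}\cup\gamma^{(k)}_{a_i},\gamma^{(k)}_{a_i}\right\rangle=\left\langle \gamma'_{a_i},\gamma^{(k)}_{a_i}\right\rangle$, the self-intersection term vanishing (or being handled by passing to a parallel copy, so that $\left\langle \gamma^{(k)}_{a_i},\gamma^{(k)}_{a_i}\right\rangle=0$ as in the parallel-normal-curve theorem).

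The second step is to identify the component $\gamma'_{a_i}$ as carrying the subword $w_{a_i}$ and $\gamma^{(k)}_{a_i}$ as carrying $\widetilde{w}_{a_i}$, so that $\{\gamma'_{a_i},\gamma^{(k)}_{a_i}\}$ is a two-component configuration with paragraph $\{w_{a_i},\widetilde{w}_{a_i}\}$ in the sense of the theorem on $S(w)$. Applying that theorem to this pair, $\left\langle \gamma'_{a_i},\gamma^{(k)}_{a_i}\right\rangle=\sum_{a^p\in S(\{w_{a_i},\widetilde{w}_{a_i}\})} p$, where $S(\{w_{a_i},\widetilde{w}_{a_i}\})$ is the set of $a^p$ with $a^p$ in $w_{a_i}$ and $a^{-p}$ in $\widetilde{w}_{a_i}$. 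Since every letter of $w$ appears exactly twice and $a_i$ itself is not a letter of $w_{a_i}$, a letter $a$ occurring in $w_{a_i}$ has its second occurrence either again in $w_{a_i}$ or in $\widetilde{w}_{a_i}$; the pairs contributing to $S$ are precisely those split across the two subwords, which matches the definition of $S_{a_i}(w)$ as the set of $a^p$ occurring in $w_{a_i}$ (with the complementary occurrence forced into $\widetilde{w}_{a_i}$). Combining, $\alpha_{a_i}(w)=\sum_{a^p\in S_{a_i}(w)} p$.

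The main obstacle I expect is bookkeeping about signs and orientations at the split crossing: one must check that the orientation the splitting construction (Figure \ref{reali2}) assigns to $\gamma^{(k)}_{a_i}$ relative to $\gamma'_{a_i}$ is the one for which the theorem's sign convention $p=\pm1$ agrees with the exponent of $a$ as written in $w_{a_i}$, rather than its negative. Equivalently, one needs that traveling $\gamma'_{a_i}$ reads off $w_{a_i}$ with the \emph{same} signs as in $w$ (a consequence of the remark "$\{\widetilde w_t,w_t\}$ is a signed Gauss paragraph of $\gamma^{(k)}_{w^{a_t}}$" in \S3.3). Given that normalization, the identity is just the substitution $S(\{w_{a_i},\widetilde w_{a_i}\})=S_{a_i}(w)$, which is definitional. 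So the corollary follows, and the only genuine content beyond the cited theorem is this orientation check, which I would dispatch by inspecting Figure \ref{reali2} directly.
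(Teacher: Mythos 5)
Your proposal is correct and follows essentially the same route as the paper: split the crossing $a_i$, use Lemma \ref{homolo1} to replace $\left\langle \varphi_w,\varphi_{a_i}\right\rangle$ by $\left\langle \gamma'_{a_i},\gamma^{(k)}_{a_i}\right\rangle$, apply the two-component formula to the paragraph $\{w_{a_i},\widetilde{w}_{a_i}\}$, and identify $S(\{w_{a_i},\widetilde{w}_{a_i}\})$ with $S_{a_i}(w)$. Your extra care about the self-intersection term, the orientation convention at the split crossing, and the cancellation of letters occurring twice inside $w_{a_i}$ is a welcome elaboration of steps the paper leaves implicit, but it is not a different argument.
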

\begin{proof}
We know that $\check{w}=\{w_{a_i},\widetilde{w}_{a_i}\}$ is a signed Gauss paragraph of $(S_w,\gamma^{(k)}_{w^{a_i}}=\gamma'_{a_i} \cup \gamma^{(k)}_{a_{i}})$. Besides, from Lemma \ref{homolo1}, $\left\langle \gamma^{(k)}_w,\gamma^{(k)}_{a_i}\right\rangle=\left\langle \gamma'_{a_i},\gamma^{(k)}_{a_i}\right\rangle$, then 
\begin{center}
$\alpha_{a_i}(w)=\sum_{a^{p}\in S(\check{w})} p$.
\end{center}
But $S(\check{w})=S_{a_i}(w)$.
\end{proof}

From Lemma \ref{homolo1}, we do not lost generality if for some $k\geq 2$, $\gamma_j = \gamma^{(k)}_{a_j}$, $j=1,2,...,n$. 

\begin{proposition}

Let $\overline{S}_{a_i}(w)=S_{a_i}(w)\cup \{a_i,a^{-1}_i\}$, and $S^{-1}_{a_j}(w)=\{a^{-p} \mid a^{p}\in S_{a_j}(w)\}$. Then
\begin{center}
$\beta_{a_{i}a_{j}}(w)=\sum\limits_{a^{e}\in \overline{S}_{a_{i}}(w) \cap S^{-1}_{a_{j}}(w)} e$, 
\end{center}
for every $i,j=1,2,...,n$.

\end{proposition}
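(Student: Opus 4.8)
The plan is to evaluate $\beta_{a_ia_j}(w)=\left\langle \varphi_{a_j},\varphi_{a_i}\right\rangle$ directly from the definition of the intersection number, by producing transverse representatives of $\varphi_{a_i}$ and $\varphi_{a_j}$ whose common points are visible in $w$, and then applying to the resulting two--component curve the theorem that computes the intersection pairing of a two--component oriented $PL$--normal curve from its signed Gauss paragraph. By Lemma~\ref{homolo1} the number does not depend on the barycentric level, so I may use the models coming from splitting: writing $w=a_iw_{a_i}a_i^{-1}\widetilde{w}_{a_i}$ and (after a cyclic permutation) $w=a_jw_{a_j}a_j^{-1}\widetilde{w}_{a_j}$, the curve $\gamma^{(k)}_{a_i}$ is, up to subdivision, the loop supported on the arcs of $\gamma_w$ carrying the letters of the contiguous subword $\widetilde{w}_{a_i}$ together with the short arc produced at $a_i$ by the split of Figure~\ref{reali2}; this is the component whose signed Gauss word is $\widetilde{w}_{a_i}$, the choice forced by the Corollary that computes $\alpha_{a_i}(w)=\sum_{a^p\in S_{a_i}(w)}p$. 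Similarly $\gamma^{(k)}_{a_j}$ is the loop on the arcs carrying $\widetilde{w}_{a_j}$ plus its closing arc at $a_j$.

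The first substantive step is to locate the common points. As both loops are subloops of $\gamma_w$ up to small closing arcs, they generally run together along whole runs of arcs of $\gamma_w$ — precisely over the overlap of the two contiguous subwords $\widetilde{w}_{a_i}$ and $\widetilde{w}_{a_j}$ in the cyclic word $w$ — so they are far from transverse and a local perturbation (equivalently, replacing $\gamma^{(k)}_{a_j}$ by its parallel normal curve, which exists and is transverse to everything by the parallel--curve theorem above) is needed. After this perturbation the surviving transverse intersections sit at the endpoints of those shared runs, together with possible extra points near $a_i$ and $a_j$ where the closing arcs live; one checks that such an endpoint is a genuine intersection exactly at a $4$--valent crossing $a_t$ of $\gamma_w$ for which one of the two passages of $\gamma_w$ through $a_t$ lies on $\gamma^{(k)}_{a_i}$ and the other on $\gamma^{(k)}_{a_j}$, and that the closing arc of $\gamma^{(k)}_{a_i}$ contributes exactly when a passage through the crossing $a_i$ is used by $\gamma^{(k)}_{a_j}$ — that is, when $a_i$ or $a_i^{-1}$ occurs in $w_{a_j}$ — which is the role of the adjoined pair $\{a_i,a_i^{-1}\}$ in $\overline{S}_{a_i}(w)$, while the closing arc of $\gamma^{(k)}_{a_j}$, by the same local inspection, creates nothing new. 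Using $\overline{S}_{a_i}(w)=L\setminus\widetilde{w}_{a_i}$, where $L$ is the set of all $2n$ signed letters, the condition ``one passage of $a_t$ on $\gamma^{(k)}_{a_i}$, the other on $\gamma^{(k)}_{a_j}$'' becomes exactly membership of the relevant signed letter in $\overline{S}_{a_i}(w)\cap S^{-1}_{a_j}(w)$.

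Finally I would fix the signs. Since the two loops traverse the arcs of $\gamma_w$ with the orientation inherited from $\gamma_w$, the local picture at a common crossing $a_t$ is the one $\gamma_w$ had there, so the convention of Figure~\ref{cross} assigns the same sign; viewing the perturbed $\gamma^{(k)}_{a_j}\cup\gamma^{(k)}_{a_i}$ as a two--component oriented $PL$--normal curve and feeding it to the theorem $\left\langle \gamma_1,\gamma_2\right\rangle=\sum_{a^p\in S(w)}p$, the contribution of $a_t$ is the exponent with which the corresponding passage occurs, and the substitution $a^e\leftrightarrow a^{-e}$ packaged into the definition of $S^{-1}_{a_j}(w)$ turns it into the summand $e$ in the statement; summing over the surviving points and the $\{a_i,a_i^{-1}\}$--corrections yields the formula. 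The main obstacle is exactly this last bookkeeping together with the perturbation near the two split points: one must make the local modifications and the identification of the two--component paragraph explicit enough to be sure that precisely the claimed intersections survive, none spuriously or with multiplicity, each carrying the sign prescribed by the exponents of $w$; the surrounding algebra is the same cancellation of pairs $a^p,a^{-p}$ that already underlies the Corollary for $\alpha_{a_i}(w)$.
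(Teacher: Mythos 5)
Your proposal follows essentially the same route as the paper: both represent $\varphi_{a_i},\varphi_{a_j}$ by the split curves, restore transversality by replacing $\gamma^{(k)}_{a_j}$ with its parallel curve $\gamma^{p}_{a_j}$, match the surviving intersection points with the crossings of $w$ whose two passages are distributed between the two subloops (the adjoined pair $\{a_i,a_i^{-1}\}$ accounting for the closing arc at the split point, exactly the paper's Cases 2 and 3), and read off the local signs from the exponents via the two-component paragraph formula. The residual bookkeeping you flag (cancelling pairs $a^{p},a^{-p}$ and the local sign check at each crossing) is carried out in the paper only at the same pictorial level of detail, so your outline is on par with the published argument.
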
 
\begin{proof}

Let $a^{e_{r}}_{r} \in \overline{S}_{a_i}(w)\cap S^{-1}_{a_j}(w)$. Consider the following cases:

\textbf{Case 1:} Suppose that $a^{e_{r}}_{r}\neq a^{\pm 1}_{i}$. Then $a_{r}$ represents a common crossing point of $\gamma_{i}$ and $\gamma^{p}_{j}$, and moreover $\epsilon^{\gamma_i \gamma^{p}_j}_{a_r}$

\textbf{Case 2:} Consider $a^{e_r}_r=a_i$, then $a^{-1}_i\in S_{a_j}(w)$ and so, $\gamma^{(p)}_j$ travels the crossing point $a_i$ negatively, see Figure \ref{cruinf2}. Figure \ref{cruinf2} describes the constructions of $\gamma_i$. from that, we conclude that $\epsilon^{\gamma_i \gamma^{p}_j}_{a_i}=-1$.
\begin{figure}[ht]
\begin{center}
\includegraphics[scale=0.6]{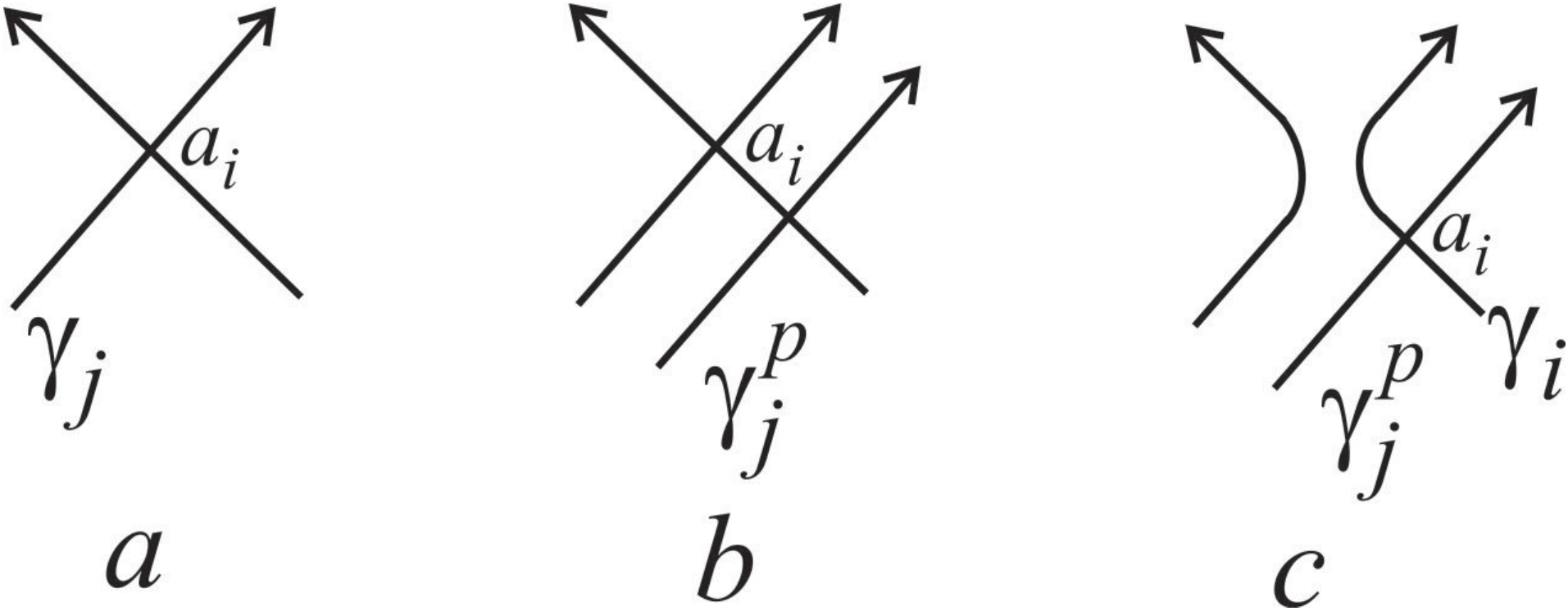}
\caption{Homology product}
\label{cruinf2}
\end{center}
\end{figure}

From the Figure \ref{cruinf2}, $a_i \in S(w_{a_i a_j})$.

\textbf{Case 3:} If $a^{e_r}_r=a^{-1}_i$, we proceed as in Case $2$, and so, $\epsilon^{\gamma_i \gamma^{p}_j}_{a_i}=1$.
\end{proof}

\end{document}